\renewcommand{\@seccntformat}[1]{\bf\csname the#1\endcsname.}
\renewcommand{\section}{\@startsection{section}{1}
	\z@{.7\linespacing\@plus\linespacing}{.5\linespacing}
	{\normalfont\upshape\bfseries\centering}}
\renewcommand{\@biblabel}[1]{\@ifnotempty{#1}{#1.}}
\theoremstyle{plain}
\newtheorem{thm}{Theorem}[section]
\newtheorem{prop}[thm]{Proposition}
\newtheorem{cor}[thm]{Corollary}
\theoremstyle{definition}
\newtheorem{ex}[thm]{Example}
\newtheorem{defn}[thm]{Definition}
\newtheorem{rem}{Remark}[section]
\def\M{{\mathcal M}}
\def\R{{\mathcal R}}
\def\L{{\mathcal L}}
\def\N{{\mathcal N}}
\def\T{{\mathcal T}}
\def\S{{\mathcal S}}
\def\>{\succ}
\def\<{\prec}
\def\M{{\mathcal M}}
\def\b{\beta}
\def\a{\alpha}
\def\l{\lambda}
\def\p{\partial}
\def\m{\mu}
\begin{document}
\title[Sania Asif\textsuperscript{1,2}, Yao Wang* \textsuperscript{1,2},  Bouzid Mosbahi\textsuperscript{3}, Imed Basdouri \textsuperscript{4}
]{Cohomology and deformation theory of $\mathcal{O}$-operators on Hom-Lie conformal algebras}
\author{ Sania Asif\textsuperscript{1, 2}, 
Yao Wang \textsuperscript{1, 2}, Bouzid Mosbahi \textsuperscript{3}
, Imed Basdouri\textsuperscript{4}}
  \address{\textsuperscript{1,2}  School of Mathematics and Statistics, Nanjing University of Information Science and Technology,
   		Nanjing, Jiangsu, 210044, P. R. China.}
   	\address{\textsuperscript{1,2}Center for Applied Mathematics of Jiangsu Province/Jiangsu International Joint Laboratory on System Modeling and Data Analysis, Nanjing University of Information Science and Technology, Nanjing, Jiangsu, 210044, P. R. China.}
 \address{\textsuperscript{3}  University of Sfax, Faculty of Sciences of Sfax, BP 1171, 3038, Sfax, Tunisia.}
 \address{\textsuperscript{4}University of Gafsa, Faculty of Sciences of Gafsa, 2112 Gafsa, Tunisia.}
\email{\textsuperscript{1*}11835037@zju.edu.cn}
\email{\textsuperscript{2*}wangyao@nuist.edu.cn}
	\email{\textsuperscript{3}bouzidmosbahy@gmail.com}
\email{\textsuperscript{4}basdourimed@yahoo.fr}

\keywords{$\mathcal{O}$-operator, Nijenhuis operator, Hom-Lie conformal algebra, Representations, Deformation, Cohomology}
		\subjclass[2000]{Primary 11R52, 15A99, 17B67, 17B10, Secondary 16G30.}
		\date{\today}
		\thanks{This work is supported by the Jiangsu Natural Science Foundation Project (Natural Science Foundation of Jiangsu Province), Relative Gorenstein cotorsion Homology Theory and Its Applications (No.BK20181406).}
			\begin{abstract}In the present paper, we aim to introduce the cohomology of $\mathcal{O}$-operators defined on the Hom-Lie conformal algebra concerning the given representation. To obtain the desired results, we describe three different cochain complexes and discuss the interrelation of their coboundary operators. And show that differential maps on the graded Lie algebra can also be defined by using the Maurer-Cartan element. We further find out that, the $\mathcal{O}$-operator on the given Hom-Lie conformal algebra serves as a Maurer-Cartan element and it leads to acquiring the notion of a differential map in terms of $\mathcal{O}$-operator $\delta_\T$. Next, we provide the notion of Hom-pre-Lie conformal algebra, that induces a sub-adjacent Hom-Lie conformal algebra structure. The differential $\delta_{\b,\a}$ of this sub-adjacent Hom-Lie conformal algebra is related to the differential $\delta_\T$. Finally, we provide the deformation theory of $\mathcal{O}$-operators on the Hom-Lie conformal algebras as an application to the cohomology theory, where we discuss linear and formal deformations in detail.
		\end{abstract}
		\footnote{The second author is corresponding author. \textbf{Emails:} 11835037@zju.edu.cn; wangyao@nuist.edu.cn.}
		\maketitle
		\section{ Introduction}Hom-Lie conformal algebra represents a crucial category within conformal algebras, wherein Lie conformal algebras are twisted by the $\mathbb{C}$-linear endomorphism maps, under a specific set of axioms. Yuan et al. pioneered the study of these algebras in \cite{Yuan}, introducing the key concepts of Hom-Lie conformal algebras, Gel'fand-Dorfman bialgebras and established an equivalence between them. Subsequent advancements extended this study to the case of Hom-Lie conformal superalgebras in \cite{Yuan2}. In the study carried out by Zhao et al. in \cite{Zhao-Yuan-Chen}, an in-depth analysis of the cohomology and deformation of Hom-Lie conformal algebras was presented, along with the significant insights into the properties of derivations on the multiplicative Hom-Lie conformal algebras. Guo et al. explored the corresponding concepts in the context of the Hom-Lie conformal superalgebras, see \cite{Guo-Dong-Wang}. The growing significance of this subject prompted the introduction of a Hom-pre Lie conformal bialgebra structure in \cite{Guo-Zhang-Wang}. Successively, numerous researchers have explored this concept, and provide valuable results, particularly in the domain of $\delta$-Hom Jordan Lie conformal algebra, as evidenced in \cite{Guo-Wang}.  
		\par The $\mathcal{O}$-operator, a mathematical structure often referred to as a relative Rota-Baxter operator, holds a significant importance. It serves as a generalization of the Rota-Baxter operator with weight $0$ in the presence of bimodule structure. Its study spans a spectrum of algebraic structures, including associative algebra, Lie algebra, Hom-Lie algebra, Malcev algebra, $L_{\infty}$-algebra, Poisson algebra, and certain conformal algebras. Initially introduced by Bai et al. in the study of classical Yang-Baxter equations \cite{Bai}, the $\mathcal{O}$-operator has since gained considerable interest, giving rise to numerous research endeavors. Yuan, in \cite{Yuan}, explored its connections with Loday algebras and conformal $NS$-algebras. This exploration extended to the cohomology of $\mathcal{O}$-operators within the domain of associative conformal algebras. In the work by Asif et al. \cite{Asif-Wang-Wu}, the study scrutinized the relationship between $\mathcal{O}$-operators, Rota-Baxter operators, and Nijenhuis operators in the frame work of an Hom-associative conformal algebras. The non-commutative associative conformal algebra, under the $\lambda$-bracket, gives rise to the Lie conformal algebra. Yuan and Liu undertaken a thorough investigation into the cohomology and deformations of Rota-Baxter operators on the Lie conformal algebras, constructing $L_\infty$-algebras from (quasi)-Twilled Lie conformal algebras, as detailed in \cite{Yuan-Liu}. Additional investigation of the $\mathcal{O}$-operators was conducted by Feng et al. in \cite{Feng-Chen}, with a focus on Leibniz conformal algebras. Their study considered conformal representations and provided a thorough investigation of the deformation theory via cohomology, providing a  Maurer-Cartan characterization in terms of $\mathcal{O}$-operators. The exploration into the cohomology of $\mathcal{O}$-operators extended to Hom-associative algebras was conducted by Chtioui et al. in \cite{Chtioui-Mabrouk-Makhlouf}. The same authors further widen this subject in the context of Lie triple systems, as detailed in \cite{Chtioui-Hajjaji-Mabrouk-Makhlouf}. For the even deeper understanding of $\mathcal{O}$-operators, reader is referred to \cite{Mishra-Naolekar, Tang-Guo-Sheng, Uchino}. 
	\par Due to the significance of Hom-Lie conformal algebras and $\mathcal{O}$-operators discussed in the preceding paragraphs, we aim to explore the $\mathcal{O}$-operator within the framework of the Hom-Lie conformal algebra. Taking idea from \cite{Asif-Wang-Yuan,Tang-Guo-Sheng,Yuan-Liu}, our objective includes the detailed examination of the cohomology and deformation theory of the operator algebras under consideration. Certainly, our literature survey features a notable absence of studies addressing the cohomology of Hom-associative conformal algebras and Lie conformal algebras. Therefore, our study about the Hom-Lie conformal algebras is anticipating to extend naturally to the case of Lie conformal algebras by considering twist map to be identity. To achieve our objective, we describe three distinct cochain complexes, examining the interrelation of their coboundary operators. We descibe that  differential map on the graded Lie algebra can be defined by using the Maurer-Cartan element. Additionally, we determine that the $\mathcal{O}$-operators on the Hom-Lie conformal algebra acts as a Maurer-Cartan element, leading to the notion of a differential map formulated in terms of the $\mathcal{O}$-operator, denoted by $\delta_\T$. We present the idea of a Hom-pre-Lie conformal algebra, inducing a sub-adjacent Hom-Lie conformal algebra structure. Notably, we observe that its differential, denoted as $\delta_{\b,\a}$, correlates to the differential $\delta_\T$.
	Finally, we describe the deformation theory of the $\mathcal{O}$-operators on the Hom-Lie conformal algebras, exhibiting as an application to the  cohomology theory. Our detailed analysis encompasses both linear and formal deformations, where we discuss the linear and formal deformations in details.  
	\par The paper is coherently structured as follows: In Section 2, we provide a thorough examination of key concepts associated to Hom-Lie conformal algebra and its representation. Moving forward to Section 3, we revisit the fundamental notion and introduce the cohomology complex $$(C^*_{\a,\b}(\L, \M), \delta_{\a,\b})$$ characterizing a Hom-Lie conformal algebra with respect to its representation. Furthermore, by considering the cochain $C^*_{\a,\a}(\L,\L)$, we define the $NR$-bracket $[\cdot,\cdot]_{NR}^{\a}$ and introduce the coboundary operator employing the Maurer-Cartan element $m_c$. Our investigation demonstrates a close connection between the "differential" of the Hom-Lie conformal algebra and the "coboundary operator in terms of the Maurer-Cartan element" of the Hom-Lie conformal algebra. This analysis ends up in the formulation of the cohomology complex $$(C^*_{\a,\a}(\L, \L), \delta_{\a}).$$ Advancement continues with the introduction of the graded Lie algebra for the graded space of $n$-cochains of the semi-direct product of $\L$ and $\M$. We showcase its Maurer-Cartan element as the sum of $m_c$ and $\rho$. In Section 4, we explicitly obtain the differential for the graded Lie algebra, constructed in the preceding section. Moreover, we define the concept of $\mathcal{O}$-operators within the framework the Hom-Lie conformal algebra. by employing the structure of graded Lie algebra on the deformation complex of a Hom-Lie conformal algebra, we establish a graded Lie algebra whose Maurer-Cartan elements precisely correspond to the $\mathcal{O}$-operators on Hom-Lie conformal algebras. This leads to the present the idea of a differential graded Lie algebra associated with an $\mathcal{O}$-operator. The final Section 5 is devoted to the discussion of the deformation of $\mathcal{O}$-operators on the Hom-Lie conformal algebra. We commence with an exploration of linear deformation and further delve into the exploration of formal deformation. This paper's structure presents the comprehensive understanding of the  systematic investigation of Hom-Lie conformal algebra and $\mathcal{O}$-operators, focusing their cohomological and deformation aspects.
		\par Through out the paper, all the vector spaces, tensor products and (bi-)linear maps are considered over a field of complex numbers $\mathbb{C}$.
		
		\section{ Preliminaries}\begin{defn}
			A Hom-Lie conformal algebra $\L$ is a $\mathbb C[\p]$-module endowed with a $\mathbb C$-bilinear map
			$[\cdot_\l\cdot]:\L \otimes \L \to \L[\l ], x\otimes y
			\mapsto [x_\l y]$,
			called the $\l $-bracket, and a $\mathbb{C}$-linear map $\a:\L\to\L$ such that $\a\p=\p\a$, satisfying the following axioms :
			\begin{align*}[\p (x)_\l y] = -\l [x_{\l} y], [x_\l \p y] &= (\p +\l )[x_{\l} y], &\textit{(conformal-sesquilinearity)} \\
			[x_\l y]&= -[y_{-\l-\p} x], &\textit{(conformal skew-symmetry)} \\
			[\a(x)_\l [y_\m z]] &= [[x_\l y]_{\l+\m}\a(z)]+[\a(y)_{\m} [x_\l z]], &\textit{(conformal Hom-Jacobi identity)}
			\end{align*}
			for all $x,y,z \in \L$ and $\l,\m\in \mathbb{C}$.
		\end{defn}
		Let us consider two $\mathbb{C}[\p]$-modules $ \M $ and $ \S $. A $\mathbb{C}[\p]$-linear map $g_{\l}:\M\to \mathbb{C}[\p]\otimes \S$ is called a conformal linear map if it satisfies the following axioms:
		\begin{equation}g_{\l}(\p m)= (\p+\l) {g}_{\l}m,\quad \forall ~m\in \M.	
		\end{equation} Any vector space containg all conformal linear maps is denoted by $CHom(\M,\S)$. If we assume that $\M=\S$, then the underlying space is denoted by $CHom(\M,\M)$, which is nothing but the endomorphism $Cend(\M)$ that satisfies the following equation : $$(\p g)_\l=-\l g_\l,\quad \forall ~g_{\l}\in Cend(\M) $$ and becomes as a $\mathbb{C}[\p]$-module.  Note that a $\mathbb{C}[\p]$-module $Cend(\M)$ carries the structure of a Lie conformal algebra on $\M$ under the $\l$-bracket given by \begin{equation}
		{[f_\l g]}_\m m= f_\l (g_{\m-\l} m) - g_{\m-\l} (f_{\l} m) \quad \forall ~f,g \in Cend(\M),~m\in \M~ and~ \l,\m\in \mathbb{C}. 
		\end{equation} This Lie conformal algebra is termed as a general Lie conformal algebra, we denote it by $gc(\M)$.
		
		\begin{defn}
			A conformal module $\M$ over a Lie conformal algebra $\L$ is a $\mathbb C[\p]$-module endowed with a $\mathbb C$-bilinear map $\L \otimes \M \to \M[\l]$ defined by  $(x,m) \mapsto x_{\l} m,$ subject to the following relations:\begin{equation}
			\begin{aligned}
			(\p x)_\l m &= -\l (x_\l m),\quad x_\l (\p m) = (\p +\l )(x_\l m),\\
			([x_\l y])_{\l+\m} m &= x_\l (y_\m m)- y_\m (x_\l m),
			\end{aligned}
			\end{equation}
			for all $x,y\in \L$ and $m \in \M$, $\l,\m\in \mathbb{C}$.\end{defn}
	 	A conformal module $\M$ over a Lie conformal algebra $\L$ is called finite if $\M$ is finitely generated
		over $\mathbb C[\p]$. It is easy to see that a conformal module $\M$ over a Lie conformal algebra $\L$ is the same as a homomorphism of Lie conformal algebras $\rho : \L\to gc(\M)$, which is called a representation of $\L$ in the $\mathbb C[\p]$-module $\M$.

	\begin{defn}A representation of a Hom-Lie conformal algebra $(\L, [\cdot_\l\cdot], \alpha)$ is a triple $(\rho, \M, \beta)$ such that $\rho: \L \to gc(\M)$ is a $ \mathbb{C}[\p] $-liner map and $(\beta,\M)$ is $\mathbb C[\p]$-module satisfying the following properties:\begin{align*}
		 \rho \partial=& \partial \rho, \\ \rho(\partial(x))_\lambda =& -\lambda \rho(x)_\lambda,\\ [\rho(x)_\lambda \rho(y)]_{\l+\m} =& \rho([x_{\lambda}y])_{\lambda+\mu},\\ \rho([x_{\lambda}y])_{\lambda+\mu}\beta =& \rho(\alpha(x))_{\lambda}\rho(y)_\mu - \rho(\alpha(y))_{\mu}\rho(x)_{\lambda}.\end{align*}
		Sometimes in  this paper, for simplification we use the notation
			$\{x, m\}:= \rho(x)_{\lambda}(m),$ for all $x \in \L, m\in \M$ and $\l\in \mathbb{C}$.\end{defn}
		\begin{ex}\label{ex1}For any integer $p\geq 0$, we can define the $p$-adjoint representation of a Hom-Lie conformal algebra $(\L, [ \cdot_\lambda\cdot], \alpha)$ on $\L$ as follows\begin{eqnarray}ad(x)^p_{\lambda}(y):= [\alpha^{p}(x)_\lambda y].\end{eqnarray} For simplicity, we can denote it by $( ad^p,\L, \alpha).$ For more detail see \cite{Yuan}.\end{ex}
		Define the conformal dual of a $\mathbb C[\p]$-module $\M$ as $\M^{*c} = CHom(M, \mathbb{C})$, where $\mathbb{C}$ is viewed
		as the trivial $\mathbb C[\p]$-module, that is $\M^{*c} = \{a : \M \to \mathbb C[\l] ~|~ a ~is~ \mathbb{C}-linear ~and ~a_\l(\p b) =\l (a_\l b)\}.$ Just like representation of a conformal module, we can define the dual representation of the dual conformal module by the triple $(\rho^*, \M^*, \b^{-1*})$ consisting of a map $\rho^*:\L\to gc(\M^*)$, such that
		 \begin{align*}
		 (\rho^*(a)_\l\varphi)_\m(m):=(\rho^*(\a^{-1}(a))_\l(\b^{-2})^*\varphi)_\m(m)=- \varphi_{\m-\l}(\rho( \a^{-1}(a))_\l \b^{-2}(m)),
		 \end{align*} for $
		 a\in \L,\varphi\in \M^*,m\in \M ~and~ \l,\m\in \mathbb C.$
	
		In particular, we can define the coadjoint representation of the Hom-Lie conformal algebra by the triple $(ad^{*}, \L^*, \alpha^{-1*})$, such that  \begin{eqnarray}(ad^{*}(x) _{\l }\varphi)_\m(y):= - \varphi_{\m-\l}([\a^{-1}(x)_{\l} \b^{-2}(y)]),~\forall~ x, y\in \L,~\varphi \in \L^*.\end{eqnarray}
		\begin{ex} Given a Lie conformal algebra $\L$ with a Lie conformal algebra homomorphism $\a : \L \to \L$, we can define a Hom-Lie conformal algebra as the triplet $(\L, \a ([\cdot_\l\cdot]), \a)$, where $[\cdot_\l \cdot]$ is the underlying Lie conformal bracket or $\l$-bracket.
		\end{ex}
	\begin{prop}\label{directsum}
			Let $(\L, [\cdot _\lambda\cdot], \alpha)$ be a Hom-Lie conformal algebra and $(\rho, \M, \beta)$ is a representation of the Hom-Lie conformal algebra $\L$. The vector space $\L\oplus \M$ is a $\mathbb C[\p]$-module by $ \p(a,m)=(\p_\L a,\p_\M m) $. In the presence of $\lambda$-bracket $[\cdot_\lambda\cdot]: (\L\oplus \M)\otimes (\L\oplus \M)\to (\L\oplus \M)[\l]$, defined~by:
			\begin{align*}[(x + m)_{\lambda}(y + n)]_{\rho}=[x_\lambda y]+ \rho(x)_\lambda n - \rho(y)_{-\partial- \lambda}m,
			\end{align*} and
		a twist map $\alpha+ \beta :\L\oplus \M\to \L\oplus \M$ defined~by: \begin{align*}
		(\alpha + \beta)(x+ m)= \alpha(x)+ \beta(m),
		\end{align*} the vector space $\L \oplus \M$ carries the structure of a Hom-Lie conformal algebra. 
		\end{prop}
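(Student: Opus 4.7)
The plan is to verify, in order, the three defining axioms of a Hom-Lie conformal algebra for the pair $([\cdot_\l\cdot]_\rho,\, \a+\b)$ on $\L\oplus\M$, reducing each identity to a combination of the axioms of $(\L,[\cdot_\l\cdot],\a)$ and of the representation $(\rho,\M,\b)$. Compatibility $(\a+\b)\p = \p(\a+\b)$ is immediate from $\a\p_\L=\p_\L\a$ and $\b\p_\M=\p_\M\b$, so the task reduces to checking conformal sesquilinearity, conformal skew-symmetry, and the conformal Hom-Jacobi identity.

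For sesquilinearity, I would expand $[\p(x+m)_\l(y+n)]_\rho = [\p x_\l y] + \rho(\p x)_\l n - \rho(y)_{-\p-\l}(\p m)$ and apply, respectively, the sesquilinearity of the $\L$-bracket, the representation identity $\rho(\p x)_\l = -\l\rho(x)_\l$, and the fact that $\rho(y)_\mu$ is conformal linear so $\rho(y)_{-\p-\l}(\p m)$ yields the expected shift. These three pieces collapse to $-\l[(x+m)_\l(y+n)]_\rho$, and sesquilinearity in the second slot is symmetric. Skew-symmetry then follows by expanding $-[(y+n)_{-\l-\p}(x+m)]_\rho$ and matching the purely $\L$-part against the skew-symmetry of $[\cdot_\l\cdot]$, while observing that the two mixed summands $\rho(x)_\l n$ and $-\rho(y)_{-\p-\l}m$ simply exchange roles under the replacement $\l \leftrightarrow -\l-\p$.

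The conformal Hom-Jacobi identity is the main computational step and the chief obstacle. Expanding $[(\a+\b)(x+m)_\l[(y+n)_\m(z+p)]_\rho]_\rho$ together with its two cyclic counterparts produces three classes of terms: those lying entirely in $\L$ (a single iterated bracket), those lying in $\M$ built from a double application of $\rho$, and mixed terms of the form $\rho([x_\l y])_{\l+\m}\b(p)$. I would then match the $\L$-component against the Hom-Jacobi identity of $\L$ itself, the double-$\rho$ terms acting on each of $m,n,p$ against the representation axiom $\rho([x_\l y])_{\l+\m}\b = \rho(\a(x))_\l\rho(y)_\m - \rho(\a(y))_\m\rho(x)_\l$, and the remaining contributions against $[\rho(x)_\l\rho(y)]_{\l+\m} = \rho([x_\l y])_{\l+\m}$.

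The principal difficulty is bookkeeping: each of the three cyclic expansions contributes several summands whose arguments involve shifts such as $-\p-\l$, $-\p-\m$, and $-\p-\l-\m$, and one must carefully track how these interact with the conformal linearity of $\rho(-)_{-\p-\l}$ when evaluated on elements of $\M$ that themselves carry $\p$-shifts coming from earlier evaluations. Once the summands are regrouped according to which of the three slots $m,n,p$ each $\rho$-chain terminates on, each group collapses to exactly one of the representation axioms, and the total vanishes as required.
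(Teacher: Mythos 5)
Your proposal is correct: the paper itself omits the computation, declaring it straightforward and citing Yuan's work, and the intended argument is exactly the direct verification you outline — checking sesquilinearity, skew-symmetry, and the Hom-Jacobi identity by reducing each to the axioms of $\L$ and of the representation $(\rho,\M,\b)$. Your regrouping of the Hom-Jacobi terms into the pure-$\L$ part, the double-$\rho$ parts matched against $\rho([x_\l y])_{\l+\m}\b = \rho(\a(x))_\l\rho(y)_\m - \rho(\a(y))_\m\rho(x)_\l$, and the mixed terms is precisely the standard bookkeeping this "straightforward" proof requires.
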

		\begin{proof}The proof of proposition is straight forward (see \cite{Yuan} for more details).
		\end{proof}
		The Hom-Lie conformal algebra $(\L \oplus \M, [\cdot_{\lambda}\cdot]_{\rho}, \alpha+\beta)$ is called the semi-direct product Hom-Lie conformal algebra, we denote it by $\L\ltimes\M$.
		\section{ Cochain complexs on the Hom-Lie conformal algebras}
This section is further divided into two subsections, where we discuss the cohomology complexes of the Hom-Lie conformal algebras in the framework of representation and Maurer-Cartan elements, respectively.
	\subsection{ The cohomology complex for a Hom-Lie conformal algebra with representation:}	
	Let $(\L, [\cdot_\l\cdot],\a)$ be a Hom-Lie conformal algebra with a representation $(\rho, \M, \b)$ on a $\mathbb {C}[\p]$-module $\M.$ Assume that $C^p_{\a,\b}(\L, \M)$ is a space of $\mathbb C$-linear maps $$f_{\l_1, \l_2, \cdots, \l_{p-1}}: \L^{\otimes p}\to \mathbb{C}[\l_1,\l_2,\cdots,\l_{p-1}]\otimes \M,$$ defined by $$x_1\otimes x_2\otimes\cdots\otimes x_p \mapsto f_{\l_1,\l_2,\cdots,\l_{p-1}}(x_1,x_2,\cdots,x_p).$$ This map satisfies 
	\begin{equation}\label{eq9}
	\b(f_{\l_1,\cdots,\l_{p-1}}(x_1, x_2, \cdots, x_p))= f_{\l_1, \cdots, \l_{p-1}}(\a(x_1), \a(x_2), \cdots, \a(x_p)), 
	\end{equation} the following conditions of conformal sesqui-linearity:
	\begin{equation}\label{eq10}
	\begin{aligned}
	f_{\l_1, \l_2,\cdots, \l_{p-1}}&(x_1,x_2,\cdots,\p(x_i),\cdots,x_p)\\&=\begin{cases}
	-\l_{i} f_{\l_1,\l_2,\cdots,\l_{p-1}}(x_1, x_2, \cdots , x_p),&i= 1, \cdots, p-1, \\
	 (\p+\l_1+\l_2+\cdots+\l_{p-1}) f_{\l_1,\l_2,\cdots,\l_{p-1}}(x_1, x_2, \cdots , x_p),&i=p. \end{cases}
	\end{aligned}\end{equation}
	and conformal skew-symmetry:
	\begin{equation}\label{eq11}
	\begin{aligned}
	f_{\l_1,\l_2,\cdots,\l_{p-1}}(x_1,x_2,\cdots,x_p)=(-1)^\tau f_{\l_{\tau(1)},\l_{\tau(2)},\cdots,\l_{\tau(p-1)}}(x_{\tau(1)}, x_{\tau(2)}, \cdots , x_{\tau(p)})|_{\l_{p}\mapsto \l_{p}^\dagger},
	\end{aligned}\end{equation}
	where $\l_{p}^\dagger=-\sum_{i=1}^{p-1}\l_{i}-\p^{\M}$, for all $x_1, x_2, \cdots, x_p\in \L$. Moreover, above equation shows that the $ \mathbb{C}$-linear map $f$ is skew-symmetric with respect to simultaneous permutation of $ x_{j} $'s and $\l_j$'s.
	
The coboundary map $\delta_{\a,\b} : C^p_{\a,\b}(\L, \M ) \to C^{p+1}_{\a,\b}(\L,\M)$, defined on the space of $p$-cochains $C^p_{\a,\b}(\L, \M ) $ is given by : 
\begin{equation}\label{coboundarymap}\begin{aligned}&(\delta_{\a,\b} f)_{\l_1,\cdots,\l_{p}}(x_1,\cdots, x_{p+1})\\=& \sum_{i=1}^{p+1} (-1)^{i+1}\rho(\a^{p-1}(x_i))_{\l_i}f_{\l_1,\cdots,\hat{\l_{i}},\cdots, \l_{p}}(x_1,\cdots,\hat{x_i},\cdots, x_{p+1})\\&+ \sum_{i<j}(-1)^{i+j} f_{\l_i+ \l_j ,\l_1,\cdots, \hat{\l_{i}}, \cdots, \hat{\l_{j}}, \cdots, \l_{p}}([{x_{i}}_{\l_i} x_j], \a(x_1), \cdots, \hat{x_i},\cdots, \hat{x_j},\cdots,\a(x_{p+1})). \end{aligned}\end{equation} 
In the above equation $f\in C^{p}(\L, \M)$, $x_1, x_2, \cdots, x_p \in \L$, and $\hat{x_i}$ shows the omission of the entry $x_{i}$ (see \cite{Zhao-Yuan-Chen} for more detail).\\
 Consider the space $C^*_{\a,\b}(\L,\M):=\bigoplus_{p\geq1}C^p_{\a,\b}(\L, \M)$ is the graded space of $p$-cochains. Assuming $f\in C^p(\L, \M)$, $\delta_{\a,\b} (f)\in C^{p+1}(\L,\M)$ and $ \delta^2_{\a,\b}(f)=0$. In this case $(C^*_{\a,\b}(\L, \M), \delta_{\a,\b})$ is a cochain complex for the Hom-Lie conformal algebra $(\L, [\cdot_\l\cdot], \a)$ with coefficients in the representation $(\rho,\M, \b)$. We denote the cohomology space associated to the cochain complex $(C^*_{\a,\b}(\L, \M), \delta_{\a,\b})$ by $\mathcal H^*_{\a,\b}(\L, \M)$.
	\begin{rem}\label{rem2.1}
	Let $(\L, [\cdot_\l\cdot],\a)$ be a regular Hom-Lie conformal algebra and $(\rho,\M,\b)$ be a representation of the Hom-Lie conformal algebra, where $\b: \M \to \M$ is invertible endomorphism. Then $0$-cochain is defined by $C^0_{\a,\b}(\L,\M):= \{m\in \M| \b=id\},$ and the coboundary map $\delta_{\a,\b}: C^0_{\a,\b}(\L,\M)\to C^1_{\a,\b}(\L,\M)$ on $0$-cochain is given by $$ \delta_{\a,\b}(m)(x) := \rho(\a^{-1}(x))_\l(m), $$ for all $m \in C^0_{\a,\b}$, $x \in \L$ and $\l\in \mathbb{C}$.
\end{rem}	
\subsection{ The cohomology complex for a Hom-Lie conformal algebra in terms of Maure-Cartan element:} Observe that, if $\M = \L,$ $\a = \b$ and the action $\rho : \L\otimes \M \to \M[\l]$ is given by the underlying Hom-Lie conformal bracket. In this case, the coboundary map in Eq. \eqref{coboundarymap} is turned to be \begin{equation}\label{coboundarymap2}\begin{aligned}&(\delta_{\a} f)_{\l_1,\cdots,\l_{p}}(x_1,\cdots, x_{p+1})\\=& \sum_{i=1}^{p+1} (-1)^{i+1} [ \a^{p-1}(x_i) _{\l_i}f_{\l_1,\cdots,\hat{\l_{i}},\cdots, \l_{p}}(x_1,\cdots,\hat{x_i},\cdots, x_{p+1})]\\&+ \sum_{i<j}(-1)^{i+j} f_{\l_i+ \l_j ,\l_1,\cdots, \hat{\l_{i}}, \cdots, \hat{\l_{j}}, \cdots, \l_{p}}([{x_{i}}_{\l_i} x_j], \a(x_1), \cdots, \hat{x_i},\cdots, \hat{x_j},\cdots,\a(x_{p+1})). \end{aligned}\end{equation}The cochain complex thus obtained is  denoted by  $(C^*_{\a}(\L,\L),\delta_{\a}).$ And this complex is the same as the cochain complex of the Hom-Lie conformal algebra $(\L, [\cdot_\l\cdot],\a)$ (defined in \cite{Yuan2}). The cohomology of the complex $(C^*_\a(\L, \L), \delta_{\a})$ serves as the cohomology of Hom-Lie conformal algebra and is denoted by  $\mathcal H^*_{\a}(\L, \L)$.
		
		\subsubsection{ Hom-Lie conformal algebras and their representations in terms of Maurer-Cartan elements:}
		Let us now consider the graded vector space $C^*_\a(\L,\L)=\bigoplus_{n\in \mathbb{Z}}C^n_\a(\L,\L)$, where $C^n_\a(\L,\L)$ is the
		space of $\mathbb{C}$-linear maps satisfying Eqs. \eqref{eq9}-\eqref{eq11}. From \cite{Asif-Wang-Yuan}, we recall that for any $f\in C^{m}_\a (\L, \L)$ and $g \in C^{n}_\a (\L, \L)$, a circle product $f\circledcirc g: C^{m}_\a (\L, \L)\otimes C^{n}_\a (\L, \L)\to C^{m+n-1}_\a (\L, \L)$ is defined by the following expression:
		\begin{equation}\label{circleproduct}
		\begin{aligned}
		(f\circledcirc g)&_{\l_1,\cdots,\l_{m+n-2}}(x_1,\cdots,x_{m+n-1})=\sum_{\tau\in S_{n,m-1}} (-1)^{\tau} f_{\l_{\tau(1)}+\cdots+\l_{\tau({n})},\l_{\tau({n+1})},\cdots,\l_{\tau({m+n-2})}}\\& (g_{\l_{\tau(1)},\cdots,\l_{\tau({n-1})}}(x_{\tau(1)},\cdots, x_{\tau(n)}), \a^{n-1}(x_{\tau(n+1)}),\cdots, \a^{n-1}(x_{\tau(m+n-1)})).
		\end{aligned}
		\end{equation} 
		for $x_1,\cdots ,x_{m+n-1}\in \L$. Whereas $S_{n,m-1}$ denotes $\{1,2,\cdots,m+n-1\}$-shuffle in the permutation group of $S_{n,m-1}$. The notation $|\tau|$ denotes the permutation $\tau$, for arbitrary $\tau\in S_{m+n-1}$.
	 \begin{rem} If $m=n=2$  in the above equation, we get 
	\begin{equation}\label{eqcirc22}
\begin{aligned}
&(f\circledcirc g)_{\l_1,\l_2}(x_1,x_2,x_3) =\\
&f_{\l_1 + \l_2}(g_{\l_1}(x_1, x_2), \alpha(x_3)) - f_{\l_1 + \l_2}(g_{\l_2}(x_1, x_3), \alpha(x_2)) + f_{\l_1 + \l_2}(g_{\l_1}(x_2, x_3), \alpha(x_1)).
\end{aligned}
\end{equation}
\end{rem}
		Now by using Eq. \eqref{circleproduct}, we can define a Nijenhuis-Richardson bracket (also called as $NR$-bracket) of degree $-1$ on graded vector space $C^*_\a(\L, \L)$ as follows: $$[f,g]_{NR}^{\a}:=f \circledcirc g -(-1)^{(m-1)(n-1)}g\circledcirc f.$$
		This $NR$-bracket is an element of the space of $C_\a^{m+n-1}(\L,\L) $. The graded vector space $ C_\a^{*}(\L, \L) $ along with the Nijenhuis- Richardson bracket $[\cdot, \cdot]^\a_{NR}$ forms a graded Lie algebra $(C_\a^{*}(\L, \L), [\cdot, \cdot]^\a_{NR})$.
	In particular, if we have an  element of a $2$-cochain of a graded Lie algebra, i.e., $m_c \in C^{2}_\a(\L, \L)$ satisfying $[m_c, m_c]^{\a}_{NR}= 0$, we can get a Hom-Lie conformal algebra by defining:
	\begin{align}
	{m_c}_{\l}(x, y):= [x_\l y],~\forall ~x, y\in \L.
	\end{align}For the related concept on the Lie conformal algebras, readers are refer to \cite{Yuan-Liu}. The element $m_c$ is called as Maure-Cartan element of the graded Lie algebra. As we are familiar with the concept that, any graded Lie algebra in the presence of differential operator provides a differential graded Lie algebra denoted by ${dgLa}$. With the help of Maurer-Cartan element, we can define the differential operator on the graded Lie algebra $(C_\a^{* }(\L, \L),[\cdot,\cdot]^\a_{NR})$ as follows:
	 \begin{prop} Consider a Hom-Lie conformal algebra $(\L, [\cdot_\lambda \cdot], \alpha)$. The differential operator $d_{m_c}:C_\a^{*}(\L, \L)\to C_\a^{*+1 }(\L, \L)$ is a $\mathbb{C}$-linear map, given by $d_{m_c}= [m_c ,-]^\a_{NR}$. 
\end{prop}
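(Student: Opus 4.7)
My plan is to verify the three properties that qualify $d_{m_c}$ as a differential on the graded Lie algebra $(C^{*}_{\a}(\L,\L),[\cdot,\cdot]^{\a}_{NR})$: $\mathbb{C}$-linearity, degree $+1$, and the square-zero identity $d_{m_c}^{2}=0$. The first two are structural and will follow almost immediately from the properties of the Nijenhuis-Richardson bracket established in the preceding discussion, while the third is the substantive content and relies on the Maurer-Cartan condition $[m_c,m_c]^{\a}_{NR}=0$.

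First I would observe that since $[\cdot,\cdot]^{\a}_{NR}$ is $\mathbb{C}$-bilinear and $m_c$ is fixed, the assignment $f\mapsto [m_c,f]^{\a}_{NR}$ is automatically $\mathbb{C}$-linear. For the degree shift, I would recall from the construction in Eq.~\eqref{circleproduct} that $f\circledcirc g\in C^{m+n-1}_{\a}(\L,\L)$ whenever $f\in C^{m}_{\a}(\L,\L)$ and $g\in C^{n}_{\a}(\L,\L)$, so the $NR$-bracket has degree $-1$. Applied with $m_c\in C^{2}_{\a}(\L,\L)$ and $f\in C^{n}_{\a}(\L,\L)$, this yields $[m_c,f]^{\a}_{NR}\in C^{n+1}_{\a}(\L,\L)$, establishing that $d_{m_c}$ maps $C^{*}_{\a}(\L,\L)$ to $C^{*+1}_{\a}(\L,\L)$.

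The main step is to verify $d_{m_c}^{2}=0$. Here I would invoke the graded Jacobi identity for $(C^{*}_{\a}(\L,\L),[\cdot,\cdot]^{\a}_{NR})$, working in the shifted grading in which $|f|':=|f|-1$ (so that the bracket has degree $0$ and $|m_c|'=1$). Applying graded Jacobi to the triple $(m_c,m_c,f)$ gives
\begin{equation*}
[m_c,[m_c,f]^{\a}_{NR}]^{\a}_{NR}=[[m_c,m_c]^{\a}_{NR},f]^{\a}_{NR}+(-1)^{|m_c|'|m_c|'}[m_c,[m_c,f]^{\a}_{NR}]^{\a}_{NR},
\end{equation*}
and since $|m_c|'|m_c|'=1$, rearranging yields $2[m_c,[m_c,f]^{\a}_{NR}]^{\a}_{NR}=[[m_c,m_c]^{\a}_{NR},f]^{\a}_{NR}$. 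The Maurer-Cartan hypothesis $[m_c,m_c]^{\a}_{NR}=0$ then forces the right-hand side to vanish, so $d_{m_c}^{2}(f)=[m_c,[m_c,f]^{\a}_{NR}]^{\a}_{NR}=0$ for every $f\in C^{*}_{\a}(\L,\L)$.

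The only subtlety I anticipate is keeping track of sign conventions when invoking the graded Jacobi identity; the safest route is to phrase the whole computation in the $(|\cdot|-1)$-shifted grading that makes the $NR$-bracket a genuine graded Lie bracket of degree zero, which is the standard presentation already implicit in the definition of $[\cdot,\cdot]^{\a}_{NR}$ above. Once this is set up correctly, the conclusion that $(C^{*}_{\a}(\L,\L),[\cdot,\cdot]^{\a}_{NR},d_{m_c})$ is a differential graded Lie algebra is immediate, and the proposition follows.
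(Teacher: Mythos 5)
Your argument is correct, but it is not the route the paper takes. The paper's proof of this proposition is purely computational: it unfolds $d_{m_c}(f)=[m_c,f]^\a_{NR}=m_c\circledcirc f-(-1)^{n-1}f\circledcirc m_c$ using the shuffle formula \eqref{circleproduct} and the identification ${m_c}_\l(x,y)=[x_\l y]$, ending with an explicit expression for $(d_{m_c}f)_{\l_1,\dots,\l_n}(x_1,\dots,x_{n+1})$ in terms of the $\l$-brackets; this explicit formula is exactly what is needed immediately afterwards to identify $d_{m_c}$ with $(-1)^{1+n}\delta_{\a,\b}$. The paper never explicitly verifies the square-zero property within this proof. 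You instead give a structural verification: linearity and the degree shift from $C^2_\a\circledcirc C^n_\a\subseteq C^{n+1}_\a$, and $d_{m_c}^2=0$ from the graded Jacobi identity in the shifted grading together with the Maurer--Cartan condition $[m_c,m_c]^\a_{NR}=0$ (which the preceding discussion guarantees holds precisely because $(\L,[\cdot_\l\cdot],\a)$ is a Hom-Lie conformal algebra). Your computation $2[m_c,[m_c,f]^\a_{NR}]^\a_{NR}=[[m_c,m_c]^\a_{NR},f]^\a_{NR}$ is sound over $\mathbb{C}$, and it supplies the one piece of the claim that the paper leaves implicit, namely that $d_{m_c}$ really is a differential. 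What you lose relative to the paper is the explicit coordinate formula, which is the actual payload the authors extract from this proposition for the comparison with the coboundary operator $\delta_{\a,\b}$; a complete treatment would combine both: your structural argument for $d_{m_c}^2=0$ and the paper's expansion for the subsequent identification.
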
   
\begin{proof}Consider that for any $f\in C_\a^{n}(\L, \L)$ and $m_c\in C_\a^{2}(\L, \L)$, we have \begin{align*}
	d_{m_c}(f)=& [m_c ,f]^\a_{NR}\\=& (m_c \circledcirc f -(-1)^{(1)(n-1)}f\circledcirc m_c) (x_1,\cdots,x_{n+1})
	\\=& m_c \circledcirc f  (x_1,\cdots,x_{n+1})-(-1)^{n}f\circledcirc m_c (x_1,\cdots,x_{n+1})
	\\=& (m_c \circledcirc f)_{\l_1,\l_2,\cdots,\l_n}  (x_1,\cdots,x_{n+1})-(-1)^{n}(f\circledcirc m_c)_{\l_1,\l_2,\cdots,\l_n} (x_1,\cdots,x_{n+1})
	\\=& \sum_{\tau\in S_{n,1}} (-1)^{\tau} {m_c}_{\l_{\tau(1)}+\cdots+\l_{\tau({n})}} (f_{\l_{\tau(1)},\cdots,\l_{\tau({n-1})}}(x_{\tau(1)},\cdots, x_{\tau(n)}), \a^{n-1}(x_{\tau(n+1)})) \\&
	-(-1)^{n} \sum_{\tau\in S_{2,n-1 }} (-1)^{\tau} f_{\l_{\tau(1)}+\l_{\tau({2})},\l_{\tau({3})},\cdots,\l_{\tau({n})}} ({m_c}_{\l_{\tau(1)}}(x_{\tau(1)}, x_{\tau(2)}), \a (x_{\tau(3)}),\cdots, \a (x_{\tau(n+1)})) 
	\\=& \sum_{\tau\in S_{n,1}} (-1)^{\tau}  [{f_{\l_{\tau(1)},\cdots,\l_{\tau({n-1})}}(x_{\tau(1)},\cdots, x_{\tau(n)})}_{\l_{\tau(1)}+\cdots+\l_{\tau({n})}} \a^{n-1}(x_{\tau(n+1)})] \\&
	-(-1)^{n} \sum_{\tau\in S_{2,n-1}} (-1)^{\tau} f_{\l_{\tau(1)}+\l_{\tau({2})},\l_{\tau({3})},\cdots,\l_{\tau({n})}} ([{x_{\tau(1)}}_{\l_{\tau(1)}} x_{\tau(2)}], \a (x_{\tau(3)}),\cdots, \a (x_{\tau(n+1)})).
	\end{align*}
\end{proof}	
 
 This differential operator $d_{m_c}$ coincides with the coboundary operator $\delta_{\a,\b}$ given by Eq. \eqref{coboundarymap}. The relationship between them is given by 
 \begin{align*}
 d_{m_c}( f)= (-1)^{1+n}\delta_{\a,\b} (f).
 \end{align*}Further if $f\in C_\a^{n}(\L, \L)$ and  $g\in C_\a^{l}(\L, \L)$, we can define differential of the $NR $-bracket $[f,g]_{NR}$ by the following way: \begin{equation*}
			\begin{aligned}
				d_{m_c}([f,g]_{NR})=& (-1)^{n+l}[m_c,[f,g]^\a_{NR}]^\a_{NR}\\=& (-1)^{n+l} ([[m_c,f]^\a_{NR},g]^\a_{NR}+ (-1)^{n+1}[f,[m_c,g]^\a_{NR}]^\a_{NR})\\=&
			((-1)^{n+l}[d_{m_c} (f),g]^\a_{NR}+ (-1)^{l+1}[f,d_{m_c}(g)]^\a_{NR})
			\\=&
			((-1)^{l-1}[\delta_{\a,\b} (f),g]^\a_{NR}+ [f,\delta_{\a,\b}(g)]^\a_{NR}).
			\end{aligned}
				\end{equation*}
			So, we obtain \begin{equation*}
			((-1)^{l-1}[\delta_{\a,\b} (f),g]^\a_{NR}+ [f,\delta_{\a,\b}(g)]^\a_{NR})=(-1)^{n+l}\delta _{\a,\b} [f,g]_{NR}^\a.
			\end{equation*} 
			Further if $g\in C^0_\a(\L,\L)$, we get the following expressions:
			 \begin{equation*}
			(-[\delta_{\a,\b} (f),g]^\a_{NR}+ [f,\delta_{\a,\b}(g)]^\a_{NR})=(-1)^{n}\delta _{\a,\b} [f,g]_{NR}^\a.
			\end{equation*}
	\subsection{ The cohomology complex for a semi-direct product Hom-Lie conformal algebra:}		
	By the proposition \eqref{directsum}, we know that the triple $(\L \oplus \M, [\cdot_\l\cdot],\a+\b)$ forms a Hom-Lie conformal algebra. Now we consider the corresponding graded Lie algebra \begin{equation}\label{eqgraded}
\L^*:= (C^{*}_{\a,\b}(\L \oplus \M, \L \oplus \M), {[\cdot_\l\cdot]}^{\a+\b}_{NR}).
\end{equation}We first  recall from \cite{Uchino} that  
 \begin{align*}
 C^{n}_{\a,\b}(\L \oplus \M)=\sum_{n=k+l}C^{n}_{\a,\b}(\L^{l,k} , \L) \oplus  \sum_{n=k+l}C^{n}_{\a,\b}(\L^{l,k}, \M),
 \end{align*} the term $\L^{l,k}$ is the direct sum of  $ l+k $-tensor powers of $ \L $ and $ \M $, where $ l $ and $ k  $ are the numbers of  $ \L $  and $\M$. For example $\L^{2,1}= \M \otimes \L\otimes \L+\L\otimes \M\otimes \L+\L\otimes \L\otimes \M$. 
 The tensor space $ (\L \oplus \M)^{\otimes n}= \bigoplus_{n=k+l}\L^{k,l} $. For $n=2$, we have $ (\L \oplus \M)^{\otimes 2}= \L^{2,0}\oplus \L^{1,1}\oplus \L^{0,2}.$ 
 Let $f\in C^{n}_{\a,\b}(\L \oplus \M)$, we say that $f$ has a bidegree $k|l$, if $f\in C^{n}_{\a,\b}(\L^{l,k-1}, \L)$ or $f\in C^{n}_{\a,\b}(\L^{l-1,k}, \M)$, where $n=k+l-1$.
 \par Also assume that, we have three $\mathbb{C}[\p]$-linear maps given by ${m_c}: \L\otimes\L \to\L[\l]$ and $\rho_1,\rho_2 : \L \to gc(\M)$ defined by ${m_c}(x, y)= [x_\l y]$ and $\rho_1(x, a)= \rho(x)_\l a$ and $\rho_2(a, x)= -\rho(x)_{-\p-\l} a$, for all $x, y \in\L$ and $a, b\in \M$. The lift of $m_c$ and $\rho$'s can be defined by $\hat{m_c}((x, a), (y, b))= ({m_c}(x, y), 0)$, $\hat{\rho_1}((x, a), (y, b))= (0, {\rho_1}(x, b))$ and $\hat{\rho_2}((x, a), (y, b))= (0, {\rho_2}(a, y))$. We observe that bidegree of these maps is $1|2$.
 Note that any cochain with the bidegree is termed as homogeneous cochain. By combining these lift maps into a single sum, we get $\hat{\theta}= \hat{m_c}+ \hat{\rho_1}+ \hat{\rho_2}$, which is a homogeneous cochain of bidegree $1|2$. The map $\hat{\theta}$ is not a lift map because there does not exists any map named as $\theta$. Moreover, the multiplication $\hat{\theta}$ is defined by \begin{align*}
 \hat{\theta}((x, a), (y,b))= (\hat{m_c}+\hat{\rho_1} +\hat{\rho_2})((x,a), (y,b)):=& ({m_c} (x,y), {\rho}_1(x,b)+{\rho}_2(a,y))\\=&([x_\l y], {\rho}(x)_\l b- {\rho}(y)_{-\p-\l}a).
 \end{align*}
With the help of above discussion, we have the following proposition.
		\begin{prop}\label{imp}Let the $ (\L,{m_c},\a) $ defines a Hom-Lie conformal algebra and map $(\rho,\M,\b)$ defines its representation if and only if $m_c+ \rho$ is a Maurer-Cartan element of the graded Lie algebra $(\L^*, {[\cdot,\cdot]}^{\a+\b}_{NR})$.
		\end{prop}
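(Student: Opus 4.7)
The plan is to unpack the Maurer-Cartan condition $[\hat{\theta},\hat{\theta}]_{NR}^{\a+\b}=0$, where $\hat{\theta}=\hat{m_c}+\hat{\rho_1}+\hat{\rho_2}$, by applying the degree-two circle-product formula \eqref{eqcirc22} and then decomposing the resulting three-input identity according to how many inputs come from $\L$ versus $\M$. Since each of $\hat{m_c},\hat{\rho_1},\hat{\rho_2}$ is homogeneous of bidegree $1|2$, the composition $\hat{\theta}\circledcirc\hat{\theta}$ is homogeneous of bidegree $1|3$; hence when evaluated on inputs $X_1,X_2,X_3\in\L\oplus\M$ its output lies in $\L$ exactly when all $X_i$ come from $\L$, and in $\M$ when exactly one $X_i$ comes from $\M$, while configurations with two or more $\M$-entries vanish automatically by bidegree. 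This reduces the equivalence to checking two substantive components.

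For the all-$\L$ component, only $\hat{m_c}\circledcirc\hat{m_c}$ contributes, and the vanishing of $(\hat{\theta}\circledcirc\hat{\theta})_{\l_1,\l_2}((x,0),(y,0),(z,0))$ rewrites, after a single use of conformal skew-symmetry, into the conformal Hom-Jacobi identity
\begin{equation*}
[\a(x)_{\l_1}[y_{\l_2}z]] = [[x_{\l_1}y]_{\l_1+\l_2}\a(z)] + [\a(y)_{\l_2}[x_{\l_1}z]].
\end{equation*}
For the mixed component, inserting an $\M$-entry into each of the three possible positions (for example $((x,0),(y,0),(0,a))$ and its cyclic shifts) activates only the cross-terms $\hat{m_c}\circledcirc\hat{\rho_i}$ and $\hat{\rho_i}\circledcirc\hat{m_c}$; after unwrapping $\hat{\rho_1}((x,a),(y,b))=(0,\rho(x)_\l b)$ and $\hat{\rho_2}((x,a),(y,b))=(0,-\rho(y)_{-\p-\l}a)$ and converting occurrences of $\hat{\rho_2}$ into $\hat{\rho_1}$ via conformal skew-symmetry, these collapse to the principal representation axiom
\begin{equation*}
\rho([x_\l y])_{\l+\m}\b(a) = \rho(\a(x))_\l\rho(y)_\m a - \rho(\a(y))_\m \rho(x)_\l a.
\end{equation*}
The remaining axioms of the representation, namely $\rho\p=\p\rho$, $\rho(\p x)_\l=-\l\rho(x)_\l$, and $[\rho(x)_\l\rho(y)]_{\l+\m}=\rho([x_\l y])_{\l+\m}$, are already encoded in the requirement that $\hat{\theta}$ be a cochain in $C^2_{\a+\b}(\L\oplus\M,\L\oplus\M)$ via Eqs.\ \eqref{eq9}--\eqref{eq11}, so they hold automatically and need no separate verification.

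The main bookkeeping obstacle will be carefully tracking the shuffle signs arising in \eqref{circleproduct}, the sign and $\l$-shift produced by conformal skew-symmetry when converting $\hat{\rho_2}$ into $\hat{\rho_1}$, and the way the three cyclic positions of the $\M$-entry in the mixed case coalesce into a single identity rather than giving three independent constraints. Once these cancellations are performed, each bidegree-homogeneous component of $\hat{\theta}\circledcirc\hat{\theta}$ is identified with exactly one defining axiom of a Hom-Lie conformal algebra together with its representation, so both directions of the equivalence follow simultaneously.
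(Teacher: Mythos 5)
Your proposal is correct and follows essentially the same route as the paper: both unpack $[m_c+\rho,m_c+\rho]^{\a+\b}_{NR}=0$ via the degree-two circle product \eqref{eqcirc22} on the semi-direct product, split the resulting identity according to how many inputs lie in $\M$ (using the bidegree $1|2$ of $\hat{m_c},\hat{\rho_1},\hat{\rho_2}$), and read off the conformal Hom-Jacobi identity from the all-$\L$ component and the representation axiom from the single-$\M$-entry component, with the compatibility $\a m_c=m_c(\a\otimes\a)$ and $\b\rho(x)_\l=\rho(\a(x))_\l\b$ absorbed into the cochain condition.
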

	\begin{proof}First, let us observe that the map $m_c+\rho \in \L^2$ if and only if 
		\begin{align*}
		(\a+\b)((m_c+\rho)(x_1+m, x_2+n))=(m_c+\rho)(\a+\b(x_1+m),\a+\b(x_2+n)),
		\end{align*} which is equivalent to the following expressions\begin{align*}
		\a (m_c(x, y))=m_c(\a(x),\a(y)) ~~and~~ \b(\rho(x)_\l(m))=\rho(\a(x))_\l(\b(m)).
		\end{align*}
	Now, we show that the map $m_c+\rho$ is a Maurer-Cartan element if and only if the following Maurer-Cartan equation is satisfied:
	 \begin{equation}\label{eqmc}
		\begin{aligned}
		&([m_c+\rho, m_c+\rho]^{\a+\b}_{NR})_{\l_1,\l_2}(x_1 + m, x_2 + n, x_3 + r)
		\\&= ((m_c+\rho)\circledcirc (m_c+\rho) -(-1)^{1.1} (m_c+\rho)\circledcirc (m_c+\rho))_{\l_1,\l_2}(x_1 + m_1, x_2 + n, x_3 + r) \\&= 2 ((m_c+\rho)\circledcirc (m_c+\rho))_{\l_1, \l_2} (x_1 + m, x_2 + n, x_3 + r)\\& \overset{\eqref{eqcirc22}}{=} 0.
		\end{aligned}
		\end{equation} Since $m_c+\rho\in C^2(\L\oplus \M)$, that is why above $NR$-bracket belongs to the space of $(2+2-1=3)$-cochains. The Eq. \eqref{eqmc} provides us following two expressions: 
		\begin{align*}
		[[{x_1}_\l x_2]_{\l+\m}\a(x_3)]+[[{x_2}_\m x_3]_{\l+\m}\a(x_1)]+[[{x_3}_\l x_1]_{\l+\m}\a(x_2)]=0, 
		\end{align*}
	and	by taking $m=n= 0$, we get
		\begin{align*}
		\rho([{x_1}_{\lambda}x_2])_{\lambda+\mu}\beta(r)= \rho(\alpha(x_1))_{\lambda}\rho(x_2)_\mu(r)- \rho(\alpha(x_2))_{\mu}\rho(x_1)_{\lambda}(r).
		\end{align*}
		for all $x_1, x_2, x_3 \in \L $ and $m, n, r\in \M$. Thus, it is clear that $ (\L, m_c, \a)$ defines a Hom-Lie conformal algebra and $(\rho, \M, \b)$ is a representation of the Hom-Lie conformal algebra.\end{proof}
	\section{ $\mathcal{O}$-operator on Hom-Lie conformal algebra} In this section, we define the notion of $\mathcal{O}$-operators on the Hom-Lie conformal algebra. We use the graded Lie algebra structure on the deformation complex of a Hom-Lie conformal algebra and define a graded Lie algebra whose Maurer-Cartan elements are precisely the $\mathcal{O}$-operators on Hom-Lie conformal algebras. Subsequently, we obtain a differential graded Lie algebra associated to an $\mathcal{O}$-operator.
	\subsection{ $\mathcal{O}$-operator and its representation on the Hom Lie conformal algebra:}
	\begin{defn} Let $(\L, [\cdot_\lambda\cdot], \alpha)$ be a Hom-Lie conformal algebra and $k$ be a non-negative integer. A $\mathbb{C}[\p]$-module homomorphism $\R : \L \to \L$ is called an $p$-Rota Baxter operator of weight $q$ on $(\L, [\cdot_\lambda\cdot],\alpha)$ if $\R\alpha= \alpha \R$ and the following identity is satisfied, for all $x, y\in \L$ \begin{equation}[{\R(x)}_{\lambda} \R(y)]= \R([\alpha^{p}(\R(x))_{\lambda} y]+[x_{\lambda} \alpha^{p}(\R(y))] + q([x_{\lambda}y])).\end{equation} For $\alpha= Id$, above expression coincides with the notion of Rota-Baxter operators on a Lie conformal algebra.\end{defn}
	\begin{defn}\label{def3.2}Let $(\L, [\cdot_\lambda\cdot], \alpha)$ be a Hom-Lie conformal algebra and $(\rho, \M, \beta)$ be a representation of the Hom-Lie conformal algebra. A $\mathbb{C}[\p]$-module homomorphism $\T:\M \to \L$ is called an $\mathcal{O}$-operator on $(\L, [\cdot_\l \cdot],\alpha)$ with respect to the representation $(\rho, \M, \beta)$, if the following conditions hold for $m, n\in \M$:
		\begin{align*}
		\T \beta =&\alpha \T, \\
		[\T (m)_\lambda \T (n)]=&\T(\rho(T(m))_{\lambda}(n)-\rho(\T(n))_{-\partial-\lambda}(m)).\end{align*}
	\end{defn}
	\begin{rem}Let us recall from Example \eqref{ex1}, the $p$-adjoint representation $( ad^p, \L, \alpha)$ of a Hom-Lie conformal algebra $(\L, [\cdot_\lambda\cdot], \alpha)$. Then, a $p$-Rota Baxter operator of weight $0$ on the Hom-Lie conformal algebra $(\L,[\cdot_\lambda \cdot],\alpha)$ is an $\mathcal{O}$-operator on $(\L, [\cdot_\l\cdot], \alpha)$ with respect to the representation $( ad^k, \L, \alpha)$. Thus, the notion of $\mathcal{O}$-operator is a generalization of Rota-Baxter operator and therefore it is also known as relative or generalized Rota-Baxter operator. Another name of $\mathcal{O}$-operator is Kupershmidt operator. \end{rem} 
	\begin{ex}If $\alpha= Id_{\L}$ and $\beta= Id_{\M}$, then the Definition \eqref{def3.2} coincides with the notion of $\mathcal{O}$-operators on the Lie conformal algebra.
	\end{ex}
	\begin{ex}Let $\T : \M \to \L$ is an $\mathcal{O}$-operator on the Lie conformal algebra $(\L, [ \cdot_\lambda\cdot])$ with respect to the representation $(\rho, \M)$. A pair $(\phi_\L, \phi_\M)$ is an endomorphism of the $\mathcal{O}$-operator $\T$ if the following equations hold
		\begin{align*}\T \phi_\M&=\phi_\L \T, \\ \rho(\phi_\L(x))_{\lambda}(\phi_\M (m))&=\phi_\M (\rho(x)_{\lambda}(m)),\end{align*} for all $x\in \L$, $m\in \M$ and $\l\in \mathbb{C}$.\end{ex}
	Let us consider the Hom-Lie conformal algebra $(\L, [\cdot_\lambda\cdot]_{\phi_\L}, \phi_\L)$, where the $\lambda$-bracket is given by $[\cdot_\lambda\cdot]_{\phi_\L}:= \phi_\L([\cdot_\lambda\cdot])$. If we consider the composition $\rho_{\phi_\M}:= \phi_\M\circ\rho,$ then the triple $(\rho_{{\phi}_\M}, \M, \phi_\M)$ is a representation of the Hom-Lie conformal algebra $(\L,[\cdot_\lambda\cdot]_{\phi_\L}, \phi_\L)$. Moreover,
	\begin{align*}
	[\T(m)_\lambda \T(n)]_{\phi_\L}&= \phi_\L([\T(m)_\lambda \T(n)])\\&= \phi_\L(\T(\rho(\T(m))_{\lambda}(n)- \rho(\T(n))_{-\partial-\lambda}(m)))\\&= \T(\phi_\M(\rho(\T(m))_{\lambda}(n) -\rho(\T(n))_{-\partial-\lambda}(m)))\\&= \T(\phi_\M\rho(\T(m))_{\lambda}(n) -\phi_\M\rho(\T(n))_{-\partial-\lambda}(m))\\&= \T(\rho_{\phi_\M}(\T(m))_{\lambda}(n)-\rho_{\phi_\M}(\T(n))_{-\partial-\lambda}(m)),
	\end{align*}
 for all $m, n \in \M$. Thus, the $\mathbb{C}[\p]$-module homomorphism $\T :\M \to \L$ is an $\mathcal{O}$-operator on the Hom-Lie conforml algebra $(\L, [\cdot_\lambda \cdot]_{\phi_\L}, \phi_\L)$ with respect to the representation $(\rho_{\phi_{\M}}, \M, \phi_{\M})$.
	
 Let's now introduce the notion of graph of a $\mathbb C[\partial]$-module homomorphism $\T : \M\to \L$. We denote the graph of $\T$ by $Gr(\T) = \{(\T(m),m)~|~m \in \M\}.$ The set $ Gr(\T) $ helps us to characterize the $\mathcal{O}$-operator $\T$ in terms of a Hom-Lie
 conformal subalgebra structure, consider the following proposition.
	\begin{prop}Let $(\L, [\cdot _\l \cdot], \a)$ be a Hom-Lie conformal algebra and $(\rho, \M, \b)$ is its representation. A map $\T : \M \to \L$ is an $\mathcal{O}$-operator if and only if the graph of the map $\T$, $Gr(\T)$ as defined above, is the Hom-Lie conformal subalgebra of a semi-direct product Hom-Lie conformal algebra $(\L\oplus \M, [\cdot_\l\cdot]_\rho, \a+\b)$ defined in the Proposition \eqref{directsum}.
	\end{prop}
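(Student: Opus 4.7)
The plan is to verify the biconditional by a direct computation of the $\l$-bracket on pairs lying in $Gr(\T)$ and match it against the defining identity of Definition \ref{def3.2}. Three things need to be checked for $Gr(\T)$ to be a Hom-Lie conformal subalgebra of $\L\ltimes\M$: stability under $\p$, stability under the twist $\a+\b$, and closure under $[\cdot_\l\cdot]_\rho$. The first is automatic from $\T$ being a $\mathbb{C}[\p]$-module map, the second is equivalent to $\T\b=\a\T$, and the third is where the $\mathcal{O}$-operator identity enters. So the heart of the proof is the bracket calculation.

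For the forward direction, assume $\T$ is an $\mathcal{O}$-operator. Picking $(\T(m),m),(\T(n),n)\in Gr(\T)$, I would apply the bracket of Proposition \ref{directsum} to obtain
\begin{align*}
[(\T(m)+m)_\l(\T(n)+n)]_\rho = [\T(m)_\l\T(n)] + \rho(\T(m))_\l n - \rho(\T(n))_{-\p-\l}m.
\end{align*}
Writing $u(\l):=\rho(\T(m))_\l n - \rho(\T(n))_{-\p-\l}m$, the $\mathcal{O}$-operator identity rewrites the $\L$-component as $\T(u(\l))$, so the whole expression becomes $(\T(u(\l)),u(\l))\in Gr(\T)[\l]$, giving closure. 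The twist invariance $(\a+\b)(\T(m),m)=(\a\T(m),\b(m))=(\T\b(m),\b(m))\in Gr(\T)$ follows from $\T\b=\a\T$, and $\p$-stability follows analogously from $\T\p=\p\T$.

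Conversely, assume $Gr(\T)$ is a Hom-Lie conformal subalgebra. Then $(\a+\b)(\T(m),m)\in Gr(\T)$ immediately forces $\a\T=\T\b$. Applying the same bracket computation as above, the subalgebra condition requires $\bigl([\T(m)_\l\T(n)],u(\l)\bigr)\in Gr(\T)[\l]$, and by the very definition of the graph this forces the $\L$-component to be $\T$ applied to the $\M$-component, i.e.\ $[\T(m)_\l\T(n)]=\T(u(\l))$. This is exactly the second identity in Definition \ref{def3.2}, completing the equivalence.

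The main obstacle I expect is purely interpretational rather than computational: one must read the closure condition for a conformal subalgebra correctly, namely that the bracket lands in $Gr(\T)[\l]$ (a $\mathbb{C}[\l]$-linear combination of elements of $Gr(\T)$) rather than in $Gr(\T)$ itself. Because $Gr(\T)$ is defined pointwise by a $\mathbb C[\p]$-linear relation tying the two components, the coefficient of each monomial in $\l$ must individually be a graph pair, which is what lets us pass from ``bracket lies in $Gr(\T)[\l]$'' to ``the $\l$-polynomial on the $\L$-side equals $\T$ of the $\l$-polynomial on the $\M$-side.'' Once this is articulated, the remainder is the straightforward matching already outlined.
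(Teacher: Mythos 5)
Your proposal is correct and follows essentially the same route as the paper's proof: compute the semi-direct product bracket on graph elements, use the $\mathcal{O}$-operator identity to identify the $\L$-component as $\T$ of the $\M$-component for the forward direction, and read the identity off the closure condition for the converse, with twist invariance equivalent to $\a\T=\T\b$. Your additional remark about interpreting closure in $Gr(\T)[\l]$ coefficientwise is a sensible clarification but does not change the argument.
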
	
\begin{proof} In order to show that $Gr(\T)$ is subalgebra of $(\L\oplus \M, [\cdot_\l\cdot]_\rho, \a+\b)$, we first consider that $\T$ is an $\mathcal{O}$-operator. For any $m, n\in \M$, the $Gr(\T)$ is closed under the $\l$-bracket given by
	\begin{align*}{[(T(m), m)_{\l}(T(n), n)]}_{\L\oplus \M}=& ([{\T(m)}_\l\T(n)], \rho(\T(m))_\l n- \rho (\T(n))_{-\p-\l} m)\\=& (\T(\rho(\T(m))_\l n- \rho(\T(n))_{-\p-\l} m), \rho(\T(m))_\l n- \rho (\T(n))_{-\p-\l} m)\\ \in& Gr(\T).\end{align*}
	Further more, we have\begin{align*}
	(\a+\b)(\T(m), m)= (\a(\T(m)), \b(m))= (\T(\b(m)), \b(m))\in Gr(\T).
	\end{align*}Thus, $Gr(\T)\subset (\L\oplus \M, [\cdot_\l\cdot]_\rho, \a+\b)$.\\
	Secondly, we show that if $Gr(\T)$ is a subalgebra of $\L\oplus \M$, then $\T$ should be an $\mathcal{O}$-operator. As \begin{align*} {[(T(m), m)_{\l}(T(n), n)]}_{\L\oplus \M}=& ([{\T(m)}_\l \T(n)], \rho(\T(m))_\l n- \rho(\T(n))_{-\p-\l} m) \in Gr(\T).
	\end{align*}
	It implies that $$[{\T(m)}_\l\T(n)]= \T(\rho(\T(m))_\l n-\rho (\T (n))_{-\p-\l} m)$$ also $(\a+\b)(\T(m), m)=(\a(\T(m)), \b(m))\in Gr(\T)$. This condition is equivalent to the fact that $\a\T= \T\b$. Thus, $\T$ is an $\mathcal{O}$-operator on the Hom-Lie conformal algebra.
	\end{proof}
	It is well-known that $\mathcal{O}$-operators on the Lie conformal algebras can be characterized in terms of the Nijenhuis operators on the Lie conformal algebras. In the next result, we show that  $\mathcal{O}$-operators on Hom-Lie conformal algebras can also be characterize in terms of the Nijenhuis operators. Let us first recall from \cite{Zhao-Chen-Yuan, Asif-Wang-Wu} that a $\mathbb{C}[\p]$-linear map $\N : \L\to \L$ is called a Nijenhuis operator on the Hom-Lie conformal algebra $(\L,[ \cdot_\l\cdot ], \alpha)$ if
	\begin{align*}
	\a\N&=\N\a,\\
	[\N(x)_{\lambda}\N(y)]&=\N([\N(x)_{\lambda}y]- [\N(y)_{-\partial-\lambda} x] - \N([x_{\lambda} y])),~\forall~x,y\in \L,~\l\in \mathbb{C}.
	\end{align*}
	Then, we have the following characterization of $\mathcal{O}$-operators on Hom-Lie conformal algebras in terms of Nijenhuis operators.
	
	\begin{prop}A $\mathbb{C}[\p]$-module homomorphism $\T:\M\to \L$ is an $\mathcal{O}$-operator on $(\L, [\cdot_\lambda\cdot],\alpha)$ with respect to the representation $(\rho,\M,\b)$ if and only if the operator 
		\begin{align*}
		\N_{\T}=\begin{bmatrix}
		0 & \T \\
		0 & 0
		\end{bmatrix}: \L\oplus \M\to \L\oplus \M
		\end{align*}
		 is a Nijenhuis operator on the semi-direct product Hom-Lie conformal algebra $(\L\oplus \M, [ \cdot_\lambda\cdot]_\rho, \alpha\oplus \beta)$.
	\end{prop}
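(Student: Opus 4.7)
The plan is a direct computational verification. I will unpack both defining conditions of a Nijenhuis operator on $(\L\oplus\M,[\cdot_\l\cdot]_\rho,\a+\b)$ for the specific block map $\N_\T(x,m)=(\T(m),0)$ and show that each collapses term-by-term to the corresponding defining condition of an $\mathcal{O}$-operator from Definition \ref{def3.2}. The structural observation that drives the whole calculation is that $\N_\T$ lands in $\L\oplus 0$ and only reads the $\M$-component of its input, so most of the potentially messy cross-terms in the Nijenhuis identity vanish automatically.

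First I would dispose of the commutation axiom $\N_\T(\a+\b)=(\a+\b)\N_\T$: evaluating both sides on an arbitrary $(x,m)$ yields $(\T\b(m),0)$ and $(\a\T(m),0)$ respectively, so this axiom is equivalent to $\a\T=\T\b$, which is exactly the first condition in Definition \ref{def3.2}. Next I would compute the left-hand side of the Nijenhuis identity on a generic pair $(x,m),(y,n)\in\L\oplus\M$: because both $\N_\T(x,m)$ and $\N_\T(y,n)$ have zero $\M$-slot, the semi-direct bracket formula from Proposition \ref{directsum} collapses instantly to $([\T(m)_\l\T(n)],\,0)$.

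For the right-hand side I would handle the three inner summands separately. In each of the mixed brackets $[\N_\T(x,m)_\l(y,n)]_\rho$ and $[\N_\T(y,n)_{-\p-\l}(x,m)]_\rho$, the zero $\M$-component of $\N_\T(\cdot)$ annihilates one of the two $\rho$-terms produced by the bracket formula, and the surviving $\rho$-term is precisely the $\M$-datum that will be fed into the outer $\N_\T$. The last summand $\N_\T([(x,m)_\l(y,n)]_\rho)$ already sits in $\L\oplus 0$ after the outer $\N_\T$ is applied, contributing $-\T(\rho(x)_\l n-\rho(y)_{-\p-\l}m)$ at the $\L$-level. Collecting everything and applying the outer $\N_\T$, the right-hand side reduces to
\[
\bigl(\T\bigl(\rho(\T(m))_\l n-\rho(\T(n))_{-\p-\l}m\bigr),\,0\bigr),
\]
and equating $\L$-components with the left-hand side produces exactly the $\mathcal{O}$-operator identity $[\T(m)_\l\T(n)]=\T(\rho(\T(m))_\l n-\rho(\T(n))_{-\p-\l}m)$.

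There is no substantive obstacle: every step above is reversible, so both directions of the biconditional are obtained simultaneously. The one place demanding genuine care is the bracket $[\N_\T(y,n)_{-\p-\l}(x,m)]_\rho$, where the conformal sesquilinearity substitution $\l\mapsto-\p-\l$ must be tracked precisely so that the sign and the $-\p-\l$ argument align with the ones appearing in the $\mathcal{O}$-operator relation of Definition \ref{def3.2}.
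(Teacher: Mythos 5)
Your proposal is correct and follows essentially the same route as the paper's proof: a direct block-by-block evaluation of both sides of the Nijenhuis identity for $\N_\T(x,m)=(\T(m),0)$ using the semi-direct product bracket, collapsing everything to $[\T(m)_\l\T(n)]=\T(\rho(\T(m))_\l n-\rho(\T(n))_{-\p-\l}m)$. You are in fact slightly more complete than the paper, since you also verify explicitly that $\N_\T(\a+\b)=(\a+\b)\N_\T$ is equivalent to $\a\T=\T\b$, a point the paper's proof leaves implicit.
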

	\begin{proof}Consider that for any $x, y \in \L$, $m, n \in \M$ and $\l,\m\in \mathbb{C}$, we have the following \begin{equation*}[\N_{\T} (x + m)_{\lambda} \N_{\T}(y + n)]_{\rho}= [(0 +\T(m))_{\lambda}( 0+ \T(n)) ]_{\rho}= [\T(m)_\lambda \T(n)], \end{equation*} and\begin{equation*}
		\begin{aligned}
		&\N_{\T}([\N_{\T} (x + m)_\lambda (y + n)]_{\rho} - [\N_{\T}(y + n)_{-\partial-\lambda} (x + m)]_{\rho} - \N_{\T} ([(x + m)_{\lambda} (y + n)]_{\rho}))\\=& \N_{\T}
		(([\T(m)_{\lambda} y] + \rho (\T(m))_{\lambda}(n))- ([\T(n)_{-\partial-\lambda} x] + \rho(\T(n))_{-\partial-\lambda} (m))\\&-(0 + \T(\rho(x)_{\lambda}(n)- \rho(y)_{-\partial-\lambda}(m))))\\=&\N_{\T}
		([\T(m)_{\lambda} y] -[\T(n)_{-\partial-\lambda} x]-\T(\rho(x)_{\lambda}(n)- \rho(y)_{-\partial-\lambda}(m)+\rho (\T(m))_{\lambda}(n))-\rho(\T(n))_{-\partial-\lambda} (m))\\=&\T(\rho(T(m))_{\lambda}(n)- \rho(\T(n))_{-\partial-\lambda}(m)).
		\end{aligned}\end{equation*} By the above equations, it is clear that the $\N_{\T}$ is equivalent to an $\mathcal{O}$-operator $\T$.
	\end{proof}
		\begin{defn}\label{def3.7}Let $\T : \M \to \L$ and $\T' : \M \to \L$ are two $\mathcal{O}$-operators on the Hom-Lie conformal algebra $(\L,[\cdot_\l\cdot],\alpha)$ with respect to the representation $(\rho,\M,\beta)$. A homomorphism from $\T$ to $\T'$ is given by a pair $(\phi_{\L}, \phi_{\M})$, consisting of a Hom-Lie conformal algebra homomorphism $\phi_{\L} : \L \to \L$ and a linear map $\phi_{\M}: \M\to \M$ such that following conditions are satisfied
		\begin{eqnarray}\begin{aligned}\T' \phi_{\M}=& \phi_{\L}\T,\\
		 \phi_{\M}\beta=& \beta\phi_{\M},\\
	\phi_{\M}(\rho(x)_{\lambda}(m))=& \rho(\phi_{\L}(x))_{\lambda}(\phi_{\M} (m)),~~\forall~~x\in \L,~~m\in \M,~~\l\in \mathbb{C}.
		\end{aligned}
		\end{eqnarray}
	\end{defn}
	\subsection{ Cohomology of $\mathcal{O}$-operator on the Hom-Lie conformal algebra:}
We first need to construct a graded Lie algebra for a Hom-Lie conformal algebra with a representation whose Maurer-Cartan elements characterize the $\mathcal{O}$-operators. It then follows that a given $\mathcal{O}$-operator gives rise to a differential on this graded Lie algebra and there is a one-to-one correspondence between the set of Maurer-Cartan elements in the resulting differential graded Lie algebra and the set of deformations of this $\mathcal{O}$-operator. Let's recall some useful definitions and notions.
Let $(\L,[\cdot_\l\cdot],\a)$ be a Hom-Lie conformal algebra with a representation $(\rho, \M, \b)$. Then, we have a graded Lie algebra $$(\L^*= C^{*}_{\a,\b}(\L \oplus \M, \L \oplus \M), {[\cdot,\cdot]}^{\a+\b}_{NR})$$ associated to the pair $(\L\oplus \M,\a+\b)$. Then, by Proposition \eqref{imp}, an element $m_c+ \rho\in C^{2}_{\a, \b}(\L\oplus \M, \L\oplus \M)$ satisfying $[m_c+ \rho, m_c+ \rho]^{\a+ \b}_{NR}=0$ is a Maurer-Cartan element of the graded Lie algebra $\L^*$. We can define a differential map or coboundary operator $d_{m_c+\rho}: \L^*\to \L^{*+1}$ by $d_{m_c+\rho}:= [m_c+\rho,-]^{\a+\b}_{NR}$ corresponding to our graded Lie algebra. As a result we obtain a differential graded Lie algebra $$(\L^*= C^{*}_{\a, \b}(\L \oplus \M, \L \oplus \M), {[\cdot, \cdot]}^{\a+ \b}_{NR}, d_{m_c+ \rho}).$$
 There are three steps to construct the differential graded Lie algebra whose Maurer-Cartan element characterizes the $\mathcal{O}$-operator:\\
	\textbf{Step 1:} In this step, we define the graded space of $p$-cochains. Let $(\rho, \M, \b)$ be a representation of a Lie conformal algebra $(\L, [\cdot_\l\cdot],\a)$. Now to develop the cohomology theory of $\mathcal{O}$-operator on Hom-Lie conformal algebra with respect to given representation, we consider the following space of $p$-cochains $$C^{*}_{\b, \a}(\M, \L)= \bigoplus_{p\geq1} C^{p}_{\b, \a}(\M, \L).$$ Where, the space $C^{p}_{\b, \a}(\M, \L)$ consists of all the $\mathbb{C}$-linear maps: $$f_{\l_1, \l_2, \cdots, \l_{p-1}}: \M^{\otimes p}\to\L[\l_1, \l_2, \cdots, \l_{p-1}],$$ that along with sesqui-linearity condition (see \cite{Yuan3}), satisfy the following conditions:
	\begin{align*}
	\a(f_{\l_1, \cdots, \l_{p-1}}(m_1, m_2, \cdots, m_p))= f_{\l_1, \cdots, \l_{p-1}}(\b(m_1), \b(m_2), \cdots, \b(m_p)), ~ \forall ~m_1, m_2, \cdots , m_p\in \M.
	\end{align*}
\textbf{Step 2:} In this  step, we define a skew-symmetric graded Lie bracket by
	$$\{\!\!\{\cdot,\cdot\}\!\!\}: C^{q}_{\b, \a} (\M, \L)\times  C^{p}_{\b, \a} (\M, \L) \to C^{p+ q}_{\b, \a} (\L, \L)$$ as follows:
\begin{align*}
		\{\!\!\{f,g\}\!\!\}= (-1)^p[[m_c+\rho, f]^{\a+\b}_{NR}, g]^{\a+\b}_{NR}. \end{align*}
		By definition of the graded Lie bracket $[\cdot,\cdot]^{\a+\b}_{NR}$, the bracket $\{\!\!\{\cdot,\cdot\}\!\!\}$ can be written as
		follows: 
	\begin{align*}&\{\!\!\{f,g\}\!\!\}_{\l_1,\cdots\l_{p+q-1}}(m_1, m_2,\cdots, m_{p+q})
	= \sum_{\tau\in S_{p,1, q-1}}(-1)^{\tau} f_{\l_{\tau(1)}+\cdots+\l_{\tau(p-1)}+\l_{\tau(p)},\l_{\tau(p+1)},\cdots,\l_{p+q-1}}
	\\&(\rho(g_{\l_{\tau(1)},\cdots,\l_{\tau(p-1)}}(m_{\tau(1)},\cdots,m_{\tau(p)}))_{\l_{\tau(1)}+\cdots+\l_{\tau(p-1)}+\l_{\tau(p)}}\b^{p-1}(m_{\tau(p+1)}), \b^{p}(m_{\tau(p+2)}), \cdots,\b^{p}(m_{\tau(p+q)}))
	\\&+(-1)^{pq}(\sum_{\tau\in S_{q,p}}(-1)^\tau\\& [\a^{p-1}f_{\l_{\tau(1)},\cdots\cdots,\l_{\tau(q-1)}}(m_{\tau(1)},\cdots, m_{\tau(q)})_{{\tau(1)}+\cdots+\l_{\tau(p-1)}+\l_{\tau(p)}}
	\a^{p-1}g_{\l_{\tau(p+1)},\cdots,\l_{\tau(p+q-1)}}(m_{\tau(p+1)},\cdots, m_{\tau(p+q)})]
	\\&- \sum_{\tau\in S_{q,1,p-1}}(-1)^{\tau} g_{\l_{\tau(1)}+\cdots+\l_{\tau(q-1)}+\l_{\tau(q)},\l_{\tau(q+1)},\cdots,\l_{q+p-1}}
	\\&(\rho(f_{\l_{\tau(1)},\cdots,\l_{\tau(q-1)}}(m_{\tau(1)},\cdots , m_{\tau(q)}))_{\l_{\tau(1)}+\cdots+\l_{\tau(q)}}\b^{q-1}(m_{\tau(q+1)}), \b^{q}(m_{\tau(q+2)}),\cdots, \b^{q}(m_{\tau(p+q)}))).
	\end{align*}
Moreover, for any $\T\in C^1_{\b,\a}(\M,\L)$ satisfying $\a\T=\T\b$, above bracket becomes:
	\begin{align*}\{\!\!\{\T,\T\}\!\!\}_{\l_1}(m_1, m_2)= 2(\T (\rho(\T m_{1})_{\l_1}m_{2})-\T(\rho(\T (m_{2}))_{-\p-\l_1}m_{1})-[{\T (m_{1})}_{\l_1} \T (m_{2})]).\end{align*}In turn, it follows that $\{\!\!\{\T, \T\}\!\!\}=0$ if and only if $\T : \M \to \L $ is an $\mathcal{O}$-operator on Hom-Lie conformal algebra $(\L, [\cdot_\l\cdot],\a)$ with respect to the representation $(\rho,\M,\b)$. Thus we have the following theorem.
	\begin{thm}\label{maurer}The graded space $C^*_{\b,\a}(\M, \L)$ of $p$-cochains forms a graded Lie algebra with the graded Lie bracket $\{\!\!\{-,-\}\!\!\}$. A linear map $\T: \M \to \L$ satisfying $\a\T=\T\b$ is an $\mathcal {O}$-operator on Hom-Lie conformal algebra $(\L, [\cdot_\l\cdot],\a)$ with respect to the representation $(\M,\b,\rho)$ if and only if $\T\in C^1_{\b,\a}(\M,\L)$ is a Maurer-Cartan element of the graded Lie algebra $(C^*_{\b,\a}(\M, \L), \{\!\!\{-,-\}\!\!\})$, i.e, $\{\!\!\{\T,\T\}\!\!\}=0$.
	\end{thm}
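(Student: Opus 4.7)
The plan is to prove the theorem in two stages: first establishing that $(C^*_{\b,\a}(\M,\L),\{\!\!\{-,-\}\!\!\})$ is a graded Lie algebra, and then identifying Maurer-Cartan elements with $\mathcal{O}$-operators by a direct computation.

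For the first stage, my approach is to realise $\{\!\!\{-,-\}\!\!\}$ as a \emph{derived bracket} on the ambient graded Lie algebra $(\L^*,[\cdot,\cdot]^{\a+\b}_{NR})$ of Equation~\eqref{eqgraded}. By Proposition~\ref{imp} the cochain $m_c+\rho$ is a Maurer-Cartan element of $\L^*$, so $d:=[m_c+\rho,-]^{\a+\b}_{NR}$ is a square-zero, degree $+1$ derivation and $\L^*$ becomes a differential graded Lie algebra. I would then identify $C^p_{\b,\a}(\M,\L)$ with the homogeneous bidegree $(p+1)|0$ component of $\L^*$, namely those cochains on $(\L\oplus\M)^{\otimes p}$ whose inputs all lie in $\M$ and whose output lies in $\L$.

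The key preliminary observation is that this graded subspace is an \emph{abelian} subalgebra of $\L^*$ for the $NR$-bracket: given $f\in C^p_{\b,\a}(\M,\L)$ and $g\in C^q_{\b,\a}(\M,\L)$, any term in $f\circledcirc g$ or $g\circledcirc f$ would demand substituting an $\L$-valued cochain into an $\M$-slot, which is forbidden on bidegree grounds, so $[f,g]^{\a+\b}_{NR}=0$. Under this hypothesis the standard derived-bracket theorem (in the style of Kosmann-Schwarzbach and Voronov) guarantees that $\{\!\!\{f,g\}\!\!\}=(-1)^p[[m_c+\rho,f]^{\a+\b}_{NR},g]^{\a+\b}_{NR}$, implicitly projected onto $C^*_{\b,\a}(\M,\L)$, defines a graded skew-symmetric bracket satisfying graded Jacobi, both properties being inherited formally from the dgLa axioms for $\L^*$ and the Maurer-Cartan equation $[m_c+\rho,m_c+\rho]^{\a+\b}_{NR}=0$. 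Expanding the double $NR$-bracket via formula~\eqref{circleproduct} and the decomposition $m_c+\rho=\hat{m_c}+\hat{\rho_1}+\hat{\rho_2}$ then recovers the explicit expression for $\{\!\!\{f,g\}\!\!\}$ displayed before the statement.

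For the second stage I would specialise to $f=g=\T\in C^1_{\b,\a}(\M,\L)$ with $\a\T=\T\b$. With $p=q=1$ the shuffle sums collapse to a handful of terms and a direct expansion produces
\begin{align*}
\{\!\!\{\T,\T\}\!\!\}_{\l_1}(m_1,m_2)=2\bigl(\T(\rho(\T m_1)_{\l_1}m_2)-\T(\rho(\T m_2)_{-\p-\l_1}m_1)-[\T(m_1)_{\l_1}\T(m_2)]\bigr),
\end{align*}
as already noted in the setup. The vanishing of this expression is precisely the defining identity of an $\mathcal{O}$-operator from Definition~\ref{def3.2}, establishing the equivalence. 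The main obstacle I anticipate is the bookkeeping in the first stage: although the derived-bracket formalism delivers the graded Jacobi identity abstractly, one must still verify that the iterated $NR$-bracket, after projection onto bidegree $(p+q+1)|0$, matches the closed formula for $\{\!\!\{f,g\}\!\!\}$ displayed in the paper. This requires tracking bidegrees through $\hat{m_c}$, $\hat{\rho_1}$, $\hat{\rho_2}$ individually while juggling the conformal shuffle signs in $S_{p,1,q-1}$ and $S_{q,p}$ together with the $\a+\b$-twist. A clean organising principle is that $df$ has pure bidegree $(p+1)|1$, and only the output-in-$\L$ part of $df$ can compose with the bidegree $(q+1)|0$ cochain $g$ to land back in $C^{p+q}_{\b,\a}(\M,\L)$; this cuts the combinatorics down to a manageable size.
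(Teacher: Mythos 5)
Your proposal is correct and follows essentially the same route as the paper: there too the bracket is introduced as the derived bracket $(-1)^p[[m_c+\rho,f]^{\a+\b}_{NR},g]^{\a+\b}_{NR}$ built from the Maurer--Cartan element $m_c+\rho$ of Proposition \ref{imp}, and the identification of Maurer--Cartan elements with $\mathcal{O}$-operators is the same direct expansion of $\{\!\!\{\T,\T\}\!\!\}_{\l_1}(m_1,m_2)$ against Definition \ref{def3.2}. The only difference is that you spell out the abelian-subalgebra observation and the derived-bracket theorem to justify graded skew-symmetry and the graded Jacobi identity, a point the paper asserts without argument, so your version supplies a justification the paper leaves implicit rather than taking a different path.
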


\textbf{Step 3:}In this step, we construct the differential or coboundary map for the graded Lie algebra.\begin{rem}From the Theorem \eqref{maurer}, $\T\in C^1_{\b,\a}(\M, \L)$ is a Maurer-Cartan element of the graded Lie algebra $(C^*_{\b,\a}(\M, \L), \{\!\!\{-,-\}\!\!\})$. Then, the $\mathcal{O}$-operator $\T$ induces a differential $d_\T:= \{\!\!\{T,-\}\!\!\}$ on the graded Lie algebra $(C^*_{\b,\a}(\M, \L), \{\!\!\{-,-\}\!\!\})$, which makes it a differential graded Lie algebra.
\end{rem}The cohomology of cochain complex $(C^*_{\b,\a}(\M,\L), d_\T)$ with respect to an $\mathcal{O}$-operator $\T$ is called the cohomology of the $\mathcal{O}$-operator $\T$ on the Hom-Lie conformal algebra,denoted by $\mathcal H_{\b,\a}^*(\M,\L)$.
\subsection{ Cohomology of $\mathcal{O}$-operators in terms of Hom-Lie conformal algebra's cohomology:} Now, we describe the cohomology of an $\mathcal{O}$-operator $\T$ on Hom-Lie conformal algebras $\mathcal H_{\b,\a}^*(\M,\L)$ in terms of Hom-Lie conformal algebra's cohomology with coefficients in a representation $\mathcal H_{\a,\b}^*(\L,\M)$.

We next recall the notion of a Hom-pre-Lie conformal algebra and the graded Lie algebra whose Maurer-Cartan elements characterize pre-Lie conformal algebra structures. We show that there is a close relationship between these two graded Lie algebras.
\begin{defn}
	A Hom-pre-Lie conformal algebra is a triple $(\M,*_\l,\b)$, where $\M$ is a $ \mathbb{C}[\p] $-module equipped with a bilinear map $*_{\lambda} : \M \otimes\M \to \M[\l]$ and a $\mathbb{C}$-linear map $\b :\M \to \M$ such that following equations are satisfied, for $m,n,r\in \M$:
	\begin{align*}
	\beta(m *_{\lambda} n)&=\beta(m)*_{\lambda}\beta(n),\\
	(m*_\lambda  n)*_{\lambda+ \mu}\beta(r)-\beta(m)*_\lambda(n*_{\mu}r)&= (n*_{\m}m)*_{\lambda+\mu}\beta(r)-\beta(n)*_{\mu}(m*_{\lambda}r). \end{align*}
\end{defn}
An $\mathcal{O}$-operator on a Hom-Lie conformal algebra induces a Hom-pre-Lie  conformal algebra. In particular, we have
the following straightforward proposition.
\begin{prop}
	Let $\T:\M \to \L$ be an $\mathcal{O}$-operator on the Hom-Lie conformal algebra $(\L, [\cdot_\l\cdot], \alpha)$ with
	respect to the representation $( \rho,\M,\beta)$. Then, the $\mathcal{O}$-operator induces a Hom-pre-Lie conformal algebra $(\M,*_{\l}^ \T,\beta)$, where $*_{\l}^\T$ is given by \begin{align}\label{prelie}
m*_{\l}^{\T} n = \rho(\T m)_{\lambda}(n), \quad for~ all ~ m,n \in \M.\end{align}\end{prop}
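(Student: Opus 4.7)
My plan is to verify the two axioms of a Hom-pre-Lie conformal algebra directly for the product $m *_\l^\T n := \rho(\T m)_\l(n)$, relying on the $\mathcal{O}$-operator conditions ($\a\T=\T\b$ and $[\T m_\l\T n]=\T(\rho(\T m)_\l n-\rho(\T n)_{-\p-\l}m)$) together with the standard compatibilities of a representation $(\rho,\M,\b)$. The multiplicativity $\b(m*_\l^\T n)=\b(m)*_\l^\T\b(n)$ will be a one-line check: using the $\b$-equivariance $\b\rho(x)_\l=\rho(\a x)_\l\b$ (implicit in the representation definition and needed to make Proposition \ref{directsum} work) and $\a\T=\T\b$, one obtains $\b(\rho(\T m)_\l n)=\rho(\a\T m)_\l\b(n)=\rho(\T\b m)_\l\b(n)$.

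For the Hom-pre-Lie identity, I will compute the signed sum
\[(m*_\l n)*_{\l+\m}\b(r)-(n*_\m m)*_{\l+\m}\b(r)-\b(m)*_\l(n*_\m r)+\b(n)*_\m(m*_\l r)\]
by expanding each term via the definition. The first two terms combine to $\rho\bigl(\T(\rho(\T m)_\l n)-\T(\rho(\T n)_\m m)\bigr)_{\l+\m}\b(r)$, while the last two give $-\bigl(\rho(\a\T m)_\l\rho(\T n)_\m r-\rho(\a\T n)_\m\rho(\T m)_\l r\bigr)$. The fourth axiom of the representation definition, applied to $x=\T m$, $y=\T n$, recollects the latter as $-\rho([\T m_\l\T n])_{\l+\m}\b(r)$; substituting the $\mathcal{O}$-operator equation for $[\T m_\l\T n]$ then reduces the total to $\rho\bigl(\T(\rho(\T n)_{-\p-\l}m-\rho(\T n)_\m m)\bigr)_{\l+\m}\b(r)$.

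The main obstacle is to show this residual expression vanishes as an element of $\M[\l,\m]$, despite the mismatch between the free variable $\m$ and the formal symbol $-\p-\l$. Writing $\rho(\T n)_\nu m=\sum_j\nu^j w_j$ with $w_j\in\M$ and using $\T\p=\p\T$ together with the sesqui-linearity $\rho(\p x)_\nu=-\nu\rho(x)_\nu$, one computes
\[\rho((-\p-\l)^j\T w_j)_{\l+\m}=\sum_k\binom{j}{k}(-\l)^{j-k}(\l+\m)^k\,\rho(\T w_j)_{\l+\m}=\m^j\rho(\T w_j)_{\l+\m}\]
by the binomial identity $(-\l+(\l+\m))^j=\m^j$. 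Summing over $j$ yields $\rho(\T(\rho(\T n)_{-\p-\l}m))_{\l+\m}\b(r)=\sum_j\m^j\rho(\T w_j)_{\l+\m}\b(r)=\rho(\T(\rho(\T n)_\m m))_{\l+\m}\b(r)$, which closes the verification. This ``absorption'' of the formal parameter $-\p-\l$ into $\m$ under the outer $\l+\m$ action is the only nontrivial calculation in the proof.
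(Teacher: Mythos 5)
Your proof is correct and follows essentially the same route as the paper's: expand the four terms of the Hom-pre-Lie identity, recombine the two composed-action terms via the fourth representation axiom into $\rho([\T(m)_\l\T(n)])_{\l+\m}\b(r)$, and substitute the $\mathcal{O}$-operator identity. The only difference is that you explicitly justify the absorption of the formal parameter $-\p-\l$ into $\m$ under the outer $\l+\m$ action via the binomial/sesquilinearity computation (and you verify $\b$-multiplicativity), steps the paper's proof silently elides by writing $\rho(\T(n))_\m m$ from the outset.
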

\begin{proof}Consider that, for $m,n,r\in \M$, we have\begin{align*}
	&(m*_\lambda  n)*_{\lambda+ \mu}\beta(r)- \beta(m)*_\lambda(n*_{\mu}r)- (n*_{\m}m)*_{\lambda+ \mu}\beta(r)+ \beta(n)*_{\m}(m*_{\lambda}r)\\
	=&\rho(\T(\rho(\T m)_\l n))_{\l+\m} \b(r)
	-\rho(\T(\b(m)))_{\l}\rho(\T (n))_\m r
	-\rho(\T(\rho(\T(n))_{\m}m))_{\l+\m}\b(r)\\&
	+\rho(\T\b(n))_\m(\rho(\T m)_\l r)\\=&
	\rho([\T(n)_{\l}\T(m)])_{\l+\m}\b(r) +\rho(\T(\rho(\T(m))_{\l}n -\rho(\T(n))_{\m}m))_{\l+\m}\b(r).
\end{align*}
	The Eq. \eqref{prelie} defines a Hom-pre-Lie conformal algebra if and only if
	\begin{align*}
	[\T(m)_{\l}\T(n)] +\T(\rho(\T(m))_{\l}n -\rho(\T(n))_{\m}m)\in Ker(\rho),
	\end{align*}
	where $Ker(\rho)=\{x \in \L|~~ \rho(x) = 0\}$. In particular, for any $\mathcal{O}$-operator $\T : \M \to \L$ associated to $( \rho,\M,\b)$, Eq. \eqref{prelie} defines a Hom-pre-Lie conformal algebra on $\M$.\end{proof}
If $(\M,*_{\l },\beta)$ is a Hom-pre-Lie conformal algebra, then the commutator bracket $$[m_\l n]^c= m*_{\lambda}n- n*_{-\partial-\lambda} m, \quad\forall~m, n\in\M$$ gives a Hom-Lie conformal algebra structure $\M ^{c}_{\beta}:=(\M, [\cdot_\l\cdot]^c,\beta)$. It is called the sub-adjacent Hom-Lie conformal algebra of the Hom-pre-Lie conformal algebra $(\M, *_{\lambda}, \beta)$.
\begin{prop}Let $\T:\M\to \L$ is an $\mathcal{O}$-operator on the Hom-Lie conformal algebra $(\L,[\cdot_\l\cdot],\alpha)$ with respect to the representation $(\rho,\M,\b)$. Let us define a $\mathbb{C}$-linear map $\rho_{\T}: \M\to gc(\L)$ given by \begin{align*}
	\rho_{\T}(m)_\l(x):=[{\T (m)}_\l x]+ \T(\rho(x)_{-\p -\l } (m)),\quad\textit{ for all } m\in \M ~and~ x \in \L.
	\end{align*} Then, the triple $(\rho_{\T},\L,\a)$ is a new representation of the sub-adjacent Hom-Lie conformal algebra $\M^{c}_{\beta}$.\end{prop}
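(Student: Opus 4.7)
The plan is to verify the four axioms in the definition of a representation of a Hom-Lie conformal algebra for the triple $(\rho_\T, \L, \alpha)$, with the sub-adjacent structure $\M^c_\beta = (\M, [\cdot_\l\cdot]^c, \beta)$ in the source. The first two axioms, $\rho_\T\partial = \partial\rho_\T$ and $\rho_\T(\partial m)_\l = -\l\rho_\T(m)_\l$, reduce to a routine sesquilinearity check: in the defining expression $\rho_\T(m)_\l(x) = [\T(m)_\l x] + \T(\rho(x)_{-\partial-\l}(m))$, the first summand inherits the correct scaling from the conformal sesquilinearity of the $\l$-bracket on $\L$ together with $\T\partial = \partial\T$, while the second summand uses the $\mathbb{C}[\partial]$-linearity of $\T$ and the fact that $\rho(x) \in gc(\M)$ transforms the $\partial$-action on its input into the formal shift $-\partial - \l$ in a coherent way. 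These computations do not yet require the $\mathcal{O}$-operator hypothesis.

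The heart of the argument is verifying the Hom-twist compatibility
\begin{align*}
\rho_\T([m_\l n]^c)_{\l+\mu}\alpha(x) = \rho_\T(\beta m)_\l\bigl(\rho_\T(n)_\mu(x)\bigr) - \rho_\T(\beta n)_\mu\bigl(\rho_\T(m)_\l(x)\bigr).
\end{align*}
My approach is to first expand the left-hand side using $[m_\l n]^c = \rho(\T m)_\l n - \rho(\T n)_{-\partial-\l}m$ and the $\mathcal{O}$-operator identity $\T([m_\l n]^c) = [\T(m)_\l \T(n)]$, which converts it into
\begin{align*}
\bigl[[\T(m)_\l \T(n)]_{\l+\mu}\alpha(x)\bigr] + \T\bigl(\rho(\alpha(x))_{-\partial-\l-\mu}\bigl(\rho(\T m)_\l n - \rho(\T n)_{-\partial-\l}m\bigr)\bigr).
\end{align*}
The first piece is handled by the conformal Hom-Jacobi identity for $(\L,[\cdot_\l\cdot],\alpha)$, giving $[\T(\beta m)_\l[\T(n)_\mu x]] - [\T(\beta n)_\mu[\T(m)_\l x]]$ after using $\alpha\T = \T\beta$. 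These two terms are exactly the bracket-parts appearing when the right-hand side is expanded by the definition of $\rho_\T$.

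The remaining terms on both sides all carry an outer $\T$. On the right, expansion produces cross terms of the form $[\T(\beta m)_\l \T(\rho(x)_{-\partial-\mu}n)]$, $\T(\rho([\T(n)_\mu x])_{-\partial-\l}(\beta m))$, and $\T(\rho(\T(\rho(x)_{-\partial-\mu}n))_{-\partial-\l}(\beta m))$, together with the symmetric ones obtained by swapping $(m,\l)\leftrightarrow(n,\mu)$. The plan is to repeatedly apply the $\mathcal{O}$-operator equation to collapse terms of the form $[\T(\cdot)_\l \T(\cdot)]$ into $\T(\rho(\T\cdot)_\l\cdot - \rho(\T\cdot)_{-\partial-\l}\cdot)$, and then use the representation axiom $\rho([x_\l y])_{\l+\mu}\beta = \rho(\alpha x)_\l \rho(y)_\mu - \rho(\alpha y)_\mu \rho(x)_\l$ (applied with $x = \T m$, $y = \T n$, etc.) to move a $\rho$ past a bracket. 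Pulling the outer $\T$ outside and matching via $\T\beta = \alpha\T$ should then produce a term-by-term cancellation with the expansion of the left-hand side.

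The main obstacle I anticipate is bookkeeping rather than conceptual: the $\l$-variables carry formal shifts $-\partial-\l$ and $-\partial-\mu$ arising from conformal skew-symmetry, and the conformal sesquilinearity of $\rho(x)_{-\partial-\l}$ when $x$ is itself the output of a bracket produces non-trivial re-indexing. One must carefully track how the $\partial$ in $-\partial-\l$ interacts with $\partial$-derivatives landing on the various slots, particularly in the iterated expression $\rho(\T(\rho(x)_{-\partial-\mu}n))_{-\partial-\l}$. Once this bookkeeping is done rigorously, the entire identity reduces to an instance of the representation axiom for $\rho$ combined with the defining $\mathcal{O}$-operator equation and the Hom-Jacobi identity, with no further hypotheses needed.
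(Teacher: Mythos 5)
Your proposal is correct and follows essentially the same route as the paper's proof: both expand $\rho_\T([m_\l n]^c)_{\l+\mu}(\a(x))$ via the commutator formula and the $\mathcal{O}$-operator identity, handle the pure bracket part with the conformal Hom-Jacobi identity and $\a\T=\T\b$, and collapse the cross terms $[\T(\cdot)_\l\T(\cdot)]$ on the right-hand side back through $\T$ before cancelling against the representation axiom for $\rho$. The only cosmetic difference is that you foreground the sesquilinearity checks, which the paper leaves implicit.
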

\begin{proof}First of all we  show that $\rho_\T (\beta(m))_{\lambda}(\alpha(x))= \alpha(\rho_\T (m)_{\lambda}(x))$. This identity holds by using the fact of Definition \eqref{def3.7}. In fact  \begin{align*}
	\rho_\T (\beta(m))_{\lambda}(\alpha(x))&=[\T(\beta(m))_\lambda (\alpha(x))]+ \T(\rho(\alpha(x))_{-\partial-\lambda} (\beta(m)))\\&=[\alpha \T(m)_\lambda (\alpha(x))]+ \T(\beta(\rho(x)_{-\partial-\lambda} (m)))\\&=\alpha[\T(m)_\lambda x]+ \alpha \T(\rho(x)_{-\partial-\lambda} (m))\\&=\alpha([\T(m)_\lambda x]+  \T(\rho(x)_{-\partial-\lambda} (m)))\\&=\alpha(\rho_\T (m)_{\lambda}(x)).\end{align*}
	Next, we use the properties of an $\mathcal{O}$-operator to obtain the following expressions:
	\begin{align*}
	&\rho_{\T}([m_{\lambda}n]^{c})_{\lambda+\mu}(\alpha(x))\\=& [\T([m_{\lambda}n]^{c})_{\lambda+\mu} \alpha(x)]+ \T(\rho(\alpha(x))_{-\p-\m} ([m_{\lambda}n]^{c}))\\
	=&[\T(m*_{\lambda}n-n*_{-\p-\l}m)_{\l+\m} \alpha(x)]+ \T(\rho(\a(x))_{-\p-\m} (m*_{\lambda}n-n*_{-\p-\l}m))\\=&[\T(\rho(\T m)_{\lambda}(n)-\rho (\T n)_{-\partial-\lambda}(m))_{\lambda+\mu} \alpha(x)]\\&+ \T(\rho(\alpha(x))_{-\p-\mu} (\rho(Tm)_{\lambda}(n)-\rho (\T n)_{-\partial-\lambda}(m)))\\=&[[\T m_{\lambda}\T n]_{\lambda+\mu} \alpha(x)]+\T(\rho(\alpha(x))_{-\p-\mu} (\rho(\T m)_{\lambda}(n)-\rho (\T n)_{-\partial-\lambda}(m)))\\=&[[\T m_{\lambda}\T n]_{\lambda+\mu} \alpha(x)]+ \T(\rho(\alpha(x))_{-\p-\mu} (\rho(\T m)_{\lambda}(n)))-\T(\rho(\alpha(x))_{-\p-\m}\rho (\T n)_{-\partial-\lambda}(m)).
	\end{align*}
	Now
	\begin{equation}\label{eq18}
	\begin{aligned}
	&\rho_\T(\beta(m))_{\lambda}\rho_{\T}(n)_{\mu}(x)\\=&\rho_{\T}(\beta(m))_\lambda([\T n_\mu x]+ \T(\rho(x)_{-\partial-\mu}(n)))\\
	=&[\T(\beta(m))_\lambda([\T n_\mu x]+ \T(\rho(x)_{-\partial-\mu}(n)))]+\T(\rho([\T n_\mu x]+ \T(\rho(x)_{-\partial-\mu}(n)))_{-\p-\lambda} \beta(m))\\
	=&[\T\beta(m)_\lambda[\T n_\mu x]]+ [\T\beta(m)_\lambda \T(\rho(x)_{-\partial-\mu}(n))]+\T(\rho([\T n_\mu x])_{-\p-\l} \beta(m))\\&+\T\rho(\T(\rho(x)_{-\partial-\mu}(n))_{-\p-\l} \beta(m))\\
	=&[\T\beta(m)_\lambda[\T n_\mu x]]+ \T\rho (\T\beta(m))_{\lambda}(\rho(x)_{-\partial-\mu}(n))-\T\rho(\T(\rho(x)_{-\partial-\mu}(n))_{-\p-\l}(\b(m)))\\&+\T(\rho([\T n_\mu x])_{-\p-\l} \beta(m))+ \T\rho(\T(\rho(x)_{-\partial-\mu}(n))_{-\p-\l} \beta(m))\\
	=&[\alpha (\T(m))_\lambda[\T n_\mu x]]+  \T\rho (\a\T(m))_{\lambda}(\rho(x)_{-\partial-\mu}(n))+\T(\rho([\T n_\mu x])_{-\p-\l} \beta(m)).
	\end{aligned}
	\end{equation}
	Similarly, we have
	\begin{equation}\label{eq19}
	\begin{aligned}
	&\rho_\T(\beta(n))_{\m}\rho_{\T}(m)_{\l}(x)\\=&[\alpha (\T(n))_\m[\T m_\l x]]+  \T\rho (\a\T(n))_{\m}(\rho(x)_{-\p-\l}(n))+\T(\rho([\T m_\l x])_{-\p-\m} \beta(n)).
	\end{aligned}
	\end{equation}
	Now subtraction of Eq. \eqref{eq19} from Eq. \eqref{eq18}, and by the use of definition of Hom-Lie conformal algebra and Hom-Lie conformal representation, we get  
	\begin{equation}\rho_\T(\beta(m))_{\lambda}\rho_{\T}(n)_{\mu}(x)-\rho_\T(\beta(n))_{\m}\rho_{\T}(m)_{\l}(x)=\rho_{\T}([m_{\lambda}n]^{c})_{\lambda+\mu}(\alpha(x))\end{equation}
	for all $m,n \in \M$ and $x \in \L$. Thus, the triple $({\rho}_{\T}, \L,\a)$ is a representation of sub-adjacent Hom-Lie conformal algebra $\M^{c}_{\beta}.$
\end{proof}
With the above notations, let us now consider the cochain complex $(C^*_{\b,\a}(\M, \L), \delta_{\b,\a})$, that we also call as modified cochain complex of the subadjacent Hom-Lie conformal algebra $\M ^c_\b$ with coefficients in the representation $(\rho_\T,\L, \a )$. Recall that the
differential $\delta_{\b,\a}$ is given by
\begin{equation}\label{differential}
\begin{aligned}
&\delta_{\b,\a} f_{\l_1,\cdots,\l_{p}}(m_1, m_2,\cdots, m_{p+1})\\=& \sum_{i= 1}^{p+ 1} (-1)^{i+1}\rho_\T((\b^{p-1}(m_i))_{\l_i}f_{\l_1,\cdots,\hat{\l_i},\cdots\l_p}(m_1, m_2,\cdots,\hat{m_i},\cdots, m_{p+ 1}))\\&+
\sum_{i< j} (-1)^{i+ j}f_{\l_i+\l_j,\l_1,\cdots,\hat{\l_i},\cdots,\hat{\l_j},\cdots,\l_p}([{m_i}_{\l_i} m_j]^c, \b(m_1), \cdots, \hat{m_i}, \cdots, \hat{m_j}, \cdots,\b (m_{p+1})).
\end{aligned}
\end{equation}
Thus, there are two different differentials $\delta_{\b,\a}$ and $\delta_\T := \{\!\!\{\T,-\}\!\!\}$ on the graded vector spaces
$C^*_{\b,\a}(\M, \L)$. The following proposition shows that both of these differentials yield the same
cohomology.
\begin{prop}
	Let $\T : \M \to \L$ is an $\mathcal{O}$-operator on the Hom-Lie conformal algebra $(\L, [\cdot_\l\cdot],\a)$ with
	respect to the representation $(\M, \b, \rho)$. Then, the differentials $\delta_{\b, \a}$ and $\delta_{\T}$ on $C^*_{\b, \a}(\M, \L)$ are related by
	$$\delta_\T(P) = (-1)^p\{\!\!\{\T,P\}\!\!\}, \quad for\quad P\in C^p_{\b,\a}(\M, \L) \quad and \quad p \geq 1.$$
\end{prop}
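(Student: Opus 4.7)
The plan is to prove the intended identity, namely that the coboundary $\delta_{\b,\a}$ from Eq. \eqref{differential} (applied with the representation $(\rho_\T, \L, \a)$ of the sub-adjacent Hom-Lie conformal algebra $\M^c_\b$) and the operator $\delta_\T := \{\!\!\{\T, -\}\!\!\}$ differ only by the sign $(-1)^p$. Both are $\mathbb{C}$-linear maps $C^p_{\b,\a}(\M, \L) \to C^{p+1}_{\b,\a}(\M,\L)$, so it suffices to expand each on a general tuple $(m_1,\ldots,m_{p+1})\in\M^{\otimes(p+1)}$ and match term by term.

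On the $\{\!\!\{\T,P\}\!\!\}$ side I would specialize the Step-2 formula to $f=\T$ (so $q=1$) and $g=P$ (so degree $p$). With $q=1$ the three shuffle sums in the bracket formula collapse dramatically: the first and the second become sums indexed by a single marked position $i\in\{1,\ldots,p+1\}$, while the third becomes a sum indexed by an ordered pair of distinct positions.

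The matching then splits into two halves. First, for each fixed $i$, the contribution of the first collapsed sum produces a term of the shape $\T\bigl(\rho(P(\ldots,\widehat{m_i},\ldots))_{\bullet}\,\b^{p-1}(m_i)\bigr)$, while the contribution of the second produces $\bigl[\a^{p-1}\T(m_i)_{\l_i}\,\a^{p-1}P(\ldots,\widehat{m_i},\ldots)\bigr]$. Using $\T\b=\a\T$ together with Eq. \eqref{eq9} and conformal sesqui-linearity (Eq. \eqref{eq10}), these two families fuse into $\rho_\T(\b^{p-1}(m_i))_{\l_i}P(\ldots,\widehat{m_i},\ldots)$, which is exactly the $i$-th summand in the first sum of Eq. \eqref{differential}. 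Second, for the remaining double-indexed sum, I would split it into the cases $i<j$ and $i>j$ and use the conformal skew-symmetry of $P$ (Eq. \eqref{eq11}) to fuse the two halves into a single term whose first argument is $\rho(\T m_i)_{\l_i}m_j - \rho(\T m_j)_{-\p-\l_i}m_i = [m_i{}_{\l_i}m_j]^c$; this yields precisely the second sum of Eq. \eqref{differential}.

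For the signs, the outer factor $(-1)^p$ in the definition of $\{\!\!\{\cdot,\cdot\}\!\!\}$, together with the shuffle signs $(-1)^{p+1-i}$ in the marked-position sums and the Koszul prefactor $(-1)^{pq}=(-1)^p$ on the second block, reassemble into the alternating signs $(-1)^{i+1}$ and $(-1)^{i+j}$ of Eq. \eqref{differential}, leaving exactly one global $(-1)^p$; this yields the claimed relation and shows that $\delta_{\b,\a}$ and $\delta_\T$ induce the same cohomology. The principal obstacle is the bookkeeping: carefully tracking the shuffle signs and the $\a$- and $\b$-twist powers, and correctly invoking conformal sesqui-linearity and skew-symmetry during re-indexing of the $\l$-variables. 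In particular, the substitution $\l\mapsto-\p-\l$ forced by conformal skew-symmetry is the only non-mechanical step — it is precisely what produces the two summands in the definition $\rho_\T(m)_\l(x)=[\T(m)_\l x]+\T(\rho(x)_{-\p-\l}(m))$ and allows the two marked-position families above to be fused into a single $\rho_\T$-term.
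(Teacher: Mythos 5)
Your proposal follows essentially the same route as the paper's own proof: expand $\{\!\!\{\T,P\}\!\!\}$ from the Step-2 shuffle formula with $q=1$, convert the collapsed shuffle sums into marked-position sums with signs $(-1)^{i+1}$ and $(-1)^{i+j}$, fuse the first two families into $\rho_\T(\b^{p-1}(m_i))_{\l_i}P(\ldots)$ via $\a\T=\T\b$ and fuse the double-indexed family into the sub-adjacent bracket $[{m_i}_{\l_i}m_j]^c$, recovering Eq.~\eqref{differential} up to the global factor $(-1)^p$. You also correctly read the statement as relating $\delta_{\b,\a}$ to $\{\!\!\{\T,-\}\!\!\}$, which is what the paper's displayed computation actually establishes.
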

\begin{proof}For $P\in C^p_{\a,\b}(\M, \L)$ and $p\geq1$, we have
	\begin{equation}\label{eqdifop}
	\begin{aligned}&\{\!\!\{\T,P\}\!\!\}_{\l_1,\cdots,\l_{p}}(m_{1},\cdots,m_{p+1})\\
	=&\sum_{\tau\in S_{p,1,0}} (-1)^{\tau} \T (\rho(P_{\l_{\tau(1)},\cdots,\l_{\tau(p-1)}}(m_{\tau(1)},\cdots,m_{\tau(p)}))_{\l_{\tau(1)}+\cdots+\l_{\tau(p)}}\b^{p-1}(m_{\tau(p+1)}))
	\\& +(-1)^{p}(\sum_{\tau\in S_{1,p}}(-1)^\tau [\a^{p-1} \T(m_{\tau(1)})_{\l_{\tau(1)}}  P_{\l_{\tau(2)},\cdots,\l_{\tau(p)}} (m_{\tau(2)},\cdots, m_{\tau(p+1)})]
	\\&-\sum_{\tau\in S_{1,1, p-1}}(-1)^{\tau}P_{\l_{\tau(1)}+\l_{\tau(2)},\l_{\tau(3)},\cdots,\l_{\tau(p)}}(\rho(\T(m_{\tau(1)})_{\l_{\tau(1)}}m_{{\tau}(2)},\b(m_{{\tau}(3)}),\cdots,\b(m_{{\tau}(p+ 1)})))\\
	=&(-1)^{p}(\sum_{i=1}^{p+1} (-1)^{i+1} \T  (\rho(P_{\l_1,\cdots,\hat{\l_i},\cdots,\l_{p}}(m_{1},\cdots,\hat{m_{i}},\cdots,m_{p+1}))_{\l_1+\cdots+\hat{\l_i}+\cdots+\l_{p+1}}\b^{p-1}(m_{i}))\\&+\sum_{i=1}^{p+1}(-1)^{i+1} [\a^{p-1} \T(m_{i})_{\l_i}P_{\l_1, \cdots,\hat{\l_i},\cdots, \l_{p}} (m_{1},\cdots,\hat{m_{i}},\cdots, m_{ p+1})]\\&
	+\sum_{i<j}(-1)^{i+j}P _{\l_i+\l_j,\l_1,\cdots,\hat{\l_i},\cdots,\hat{\l_j},\cdots,\l_{p}}(\rho(\T(m_{i}))_{\l_i}m_j-\rho(\T(m_{j}))_{-\p-\l_i}m_i,\b(m_1),\cdots,  
	 \b(\hat{m_i}), \\&\cdots,\b(\hat{m_j}),\cdots, \b(m_{p+1})))
	\\=& (-1)^{p}(\sum_{i= 1}^{p+ 1} (-1)^{i+1}\rho_\T(\b^{p-1}(m_i))_{\l_i}P_{\l_1, \cdots,\hat{\l_i},\cdots, \l_{p}}(m_1, m_2,\cdots,\hat{m_i},\cdots,m_{p+1})\\&+\sum_{i<j}(-1)^{i+ j}P_{\l_i+\l_j,\l_1,\cdots,\hat{\l_i},\cdots\hat{\l_j},\cdots,\l_{p}}([{m_i}_{\l_i} m_j]^c, \b(v_1), \cdots, \b(\hat{m_i}), \cdots, \b(\hat{m_j}), \cdots, \b(m_{p+1})))\\=&(-1)^{p}\delta_{\a,\b} P_{\l_1,\cdots\l_p}(m_1, m_2,\cdots, m_{p+1}).\end{aligned}\end{equation}
	Hence, $\delta_{\a, \b}(P) = (-1)^p\{\!\!\{T, P\}\!\!\}= (-1)^p\delta_{\T}(P)$, for any $P\in C^p_{\a,\b}(\M, \L)$ and $p \geq 1.$\end{proof}
\section{ Deformation of an $\mathcal{O}$-operator on regular Hom-Lie conformal algebras.}
\subsection{ Linear deformations:}Let $\mathcal{T}:\mathcal{M}\rightarrow \mathcal{L}$ be an $\mathcal{O}$-operator on Hom-Lie conformal algebra $(\mathcal{L},[\cdot_\l\cdot],\alpha)$ with a representation $(\rho, \M,\beta)$.
Let us consider $\mathcal{T}_{t}=\mathcal{T}+t\mathfrak{T}$, where $\mathfrak{T}\in C^{1}_{\b,\a}(\mathcal{M},\mathcal{L})$. If $\mathcal{T}_{t}$ is an $\mathcal{O}$-operator on the Hom-Lie conformal algebra $(\L,[\cdot_\l\cdot],\alpha)$ with a representation $(\rho, \M,\beta)$, we call $\mathcal{T}_{t}$ a linear deformation of $\mathcal{T}$ generated by the element $\mathfrak{T}$. The map $\mathcal{T}_{t}= \mathcal{T}+t\mathfrak{T}$ is a linear deformation of $\T$ if it satisfies:
\begin{align*}
\mathcal{T}_{t}\circ \beta &= \alpha\circ \mathcal{T}_{t}\\
[\mathcal{T}_{t}(m)_{\lambda} \mathcal{T}_{t}(n)] &= \mathcal{T}_{t}(\rho(\mathcal{T}_{t}(m)-\rho(\mathcal{T}_{t}(n)_{-\partial-\lambda}(m),\quad  \forall ~m, n \in \mathcal{M}.
\end{align*}This equation is equivalent to the following expressions:
\begin{align}
\label{b1}\mathfrak{T}\circ \beta &= \alpha\circ \mathfrak{T},\\\label{b2}
[\mathfrak{T}(m)_{\lambda} \mathfrak{T}(n)] &= \mathcal{T}_{t}(\rho(\mathfrak{T}(m))_\l n-\rho(\mathfrak{T}(n))_{-\partial-\lambda}m),\\\label{b3}
[\mathcal{T}(m) _{\lambda}\mathfrak{T}(n)] + [\mathfrak{T}(m) _{\lambda}\mathcal{T}(n)] &=\mathfrak{T}(\rho(\mathcal{T}(m)_\lambda n-\rho(\mathcal{T}(n)_{-\partial-\lambda} m)+\T(\rho(\mathfrak{T}(m))_{\lambda} n\\&\nonumber-\rho(\mathfrak{T}(n))_{-\partial-{\lambda}} m),
\end{align}for all $m, n \in \mathcal{M}$. It is clear that $\T_{t}$ is linear deformation of $\T$ if and only if Eqs. \eqref{b1}, \eqref{b2} and \eqref{b3} hold. The Eqs. \eqref{b1} and \eqref{b2} shows that $\mathfrak{T}$ is an $\mathcal{O}$-operator on the Hom-Lie conformal algebra $(\mathcal{L}, [\cdot_{\lambda}\cdot],\alpha)$ with a representation $(\rho,\mathcal{M},\beta)$.
Though from the Eq. (\ref{b3}) is nothing but $d_{\mathcal{T}} (\mathfrak{T}) = 0$. 
\begin{defn}
	Two linear deformations $\mathcal{T}_{t}^{1}= \mathcal{T} + t\mathfrak{T}_{1}$ and $\mathcal{T}_{t}^{2} = \mathcal{T} + t\mathfrak{T}_{2}$ are said
	to be equivalent if there exist an element $x \in \mathcal{L}$ such that $\alpha(x) =  x$ and the pair  $(Id_{\mathcal{L}} + tad^\dagger_{x}, Id_{\mathcal{M}} + t\rho(x)^\dagger)$ is a homomorphism of $\mathcal{O}$-operators from $\mathcal{T}_{t}^{1}$ to $\mathcal{T}_{t}^{2}$.
\end{defn}
Let us recall from Definition \eqref{def3.7} that the pair $(Id_\L + tad^{\dagger}_x, Id_\M+ t\rho(x)^{\dagger})$ is a homomorphism of $\mathcal{O}$-operators from $\T^1_t \to \T^2_t$, if the following conditions are satisfied:
\begin{enumerate}
	\item \label{con1} The map $Id_\L+ tad^\dagger(x)$ is a Hom-Lie conformal algebra homomorphism,
	\item \label{con2}$\b(Id_{\L}+t\rho(x)^{\dagger})= (Id_{\L} + t\rho(x)^{\dagger})\b,$
	\item \label{con3} $(\T + t\mathfrak{T}_2)(Id_\M + t\rho(x)^{\dagger})=(Id_{\L}+ tad^{\dagger}_x)(\T + t\mathfrak{T}_1),$
	\item \label{con4} $\rho ((Id_{\L}+ tad^{\dagger}_x)(y))_\m((Id_\L +t\rho(x)^\dagger)(m))=(Id_\L+ t\rho(x)^\dagger) (\rho(y)_\m(m)), \forall  y\in \L ~{ and } ~m \in \M.$
\end{enumerate}
\textbf{Condition \eqref{con1}$ \implies $} Let $(Id_\L + tad^\dagger_x , Id_\M+ t\rho(x)^{\dagger})$ be a homomorphism from $\T^1_t \to \T^2_t$ then $Id_\L + tad^\dagger_x $ is a morphism of $\L$. Thus we have
$$(Id_\L + tad^\dagger_x)[y_\m z] = [(Id_\L + tad^\dagger_x)(y)_\mu (Id_\L+ tad^\dagger_x)(z)],$$  for all $y, z \in \L$.
On comparing the coefficients of $t^2$ on both sides of the above identity, we get
$$[[x_\l\a^{-1}(y)]_\m [x_\l\a^{-1}(z)]] = 0,  \quad \forall ~y, z \in \L.$$
Clearly, invertibility of $\a$ implies that the element $x$ satisfies \begin{equation}\label{eq2con}
[[x_\l y]_\m[x_\l z]] = 0, \quad\forall~ y, z \in \L.
\end{equation}
\textbf{Condition \eqref{con2}$ \implies $} Holds, since $\a(x) = x$.\\
\textbf{Condition \eqref{con3}$ \implies $}
By using the Definition \eqref{def3.7}, we get \begin{align*}(\T + t\mathfrak{T}_1)(Id_\M + t\rho(x)^\dagger)(m) = (Id_\L + tad_x )(\T + t\mathfrak{T}_2)(m),~~ \forall ~~m\in \M,
\end{align*} which implies that
\begin{align}\label{eqsaniaasif}
(\mathfrak{T}_2- \mathfrak{T}_1)(m) =& \T (\rho(x)_\l(\b^{-1}(m)) )+ [\T(\b^{-1}m)_{-\p-\l}x] =\delta_{\T}(x)(m),\\
\label{eqsania}\mathfrak{T}_{1}\rho(x)_\l(\b^{-1}m) =& [x_\l \mathfrak{T}_2 (\b^{-1}m)], ~\forall ~m \in \M.
\end{align}
\textbf{Condition \eqref{con4}$ \implies $}
\begin{align*}
\rho(y)_\m(m) + t\rho(x)_\l\b^{-1}(\rho(y)_\m m) = &\rho(y)_\m(m)+t \rho(y)_\mu(\rho(x)_\l\b^{-1}(m))+t\rho([x_\l\a^{-1}(y)])_{\l+\m}m\\&+  t^2 \rho([x_\l \a^{-1}(y)])_{\l+\m}(\rho(x)_\l \b^{-1}(m)). 
\end{align*}Comparing the coefficients of $t$ and $t^2$ in above equation, we get the following two equations
\begin{align*}
\rho(x)_\l\b^{-1}(\rho(y)_\m(m))= \rho(y)_\mu(\rho(x)_\l\b^{-1}(m))+\rho([x_\l\a^{-1}(y)])_{\l+\m}(m)
\end{align*}
and
\begin{align*}
\rho([x,\a^{-1}(y)])_{\l+\m}\rho(x)_\l(\b^{-1}(m))= 0, \text{ for all } y \in \L, m \in \M.
\end{align*}
By using the invertibility of $\a$ and $\b$, we get
\begin{align}\rho(x)_\l\rho(y)_\m(m) &= \rho[x_\l y]_{\l+\m}(m) + \rho(y)_\m\rho(x)_\l(m)),\\
\label{eq4con}\rho([x_\l y])_{\l+\m}\rho(x)_\l m&=0.\end{align}
\begin{thm}
	Let $\T : \M\to \L$ be an $\mathcal{O}$-operator. Let $\T^1_t := \T +t\mathfrak T_1$ and $\T^2_t := \T +t\mathfrak T_2$ be two
	equivalent linear deformations of $\T$. Then, $\mathfrak T_1$ and $\mathfrak T_2$ belong to the same cohomology class in $H^1_{\a,\b}(\M, \L)$.
\end{thm}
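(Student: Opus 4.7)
The plan is to read off the conclusion directly from the equivalence conditions already extracted in the paragraphs preceding the theorem, together with the cocycle condition established earlier. Specifically, since $\mathcal{T}_t^1 = \mathcal{T} + t\mathfrak{T}_1$ is a linear deformation of $\mathcal{T}$, Equation \eqref{b3} (together with \eqref{b1}, \eqref{b2}) gives $d_{\mathcal{T}}(\mathfrak{T}_1) = 0$, i.e.\ $\mathfrak{T}_1 \in C^1_{\b,\a}(\M,\L)$ is a $1$-cocycle with respect to the differential $\delta_\T = \{\!\!\{\T,-\}\!\!\}$; the same argument applied to $\mathcal{T}_t^2$ yields $d_\T(\mathfrak{T}_2) = 0$. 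So both $\mathfrak{T}_1$ and $\mathfrak{T}_2$ represent classes in $\mathcal{H}^1_{\b,\a}(\M,\L)$.

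To show that these classes coincide, I would use Condition \eqref{con3} of the equivalence, which produced Equation \eqref{eqsaniaasif}:
\begin{equation*}
(\mathfrak{T}_2 - \mathfrak{T}_1)(m) \;=\; \T\!\bigl(\rho(x)_\lambda(\beta^{-1}(m))\bigr) + \bigl[\T(\beta^{-1}m)_{-\partial-\lambda}\, x\bigr] \;=\; \delta_\T(x)(m).
\end{equation*}
The second equality is the key identification: the right-hand side is precisely $\rho_\T(x)(\beta^{-1}m)$ up to the sesqui-linearity normalization used in Remark \ref{rem2.1} for $0$-cochains, so that $\delta_\T(x) \in C^1_{\b,\a}(\M,\L)$ is a coboundary. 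Hence $\mathfrak{T}_2 - \mathfrak{T}_1 = \delta_\T(x)$ in $C^1_{\b,\a}(\M,\L)$, which is exactly the statement that $[\mathfrak{T}_1] = [\mathfrak{T}_2]$ in $\mathcal{H}^1_{\b,\a}(\M,\L)$.

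Once these two ingredients are in place, the proof is a one-line assembly. The only subtlety worth checking carefully is that $x$ genuinely defines a $0$-cochain in the sense of Remark \ref{rem2.1}, i.e.\ that the condition $\alpha(x) = x$ built into the definition of equivalence matches the requirement $\beta = \mathrm{id}$ appearing there under the substitution $\M = \L$, $\b = \a$ used to model the $\mathcal{O}$-operator cohomology; this is the reason the equivalence is defined with the constraint $\alpha(x) = x$ in the first place. The main (very mild) obstacle is therefore bookkeeping: verifying that the expression $\T(\rho(x)_\lambda(\beta^{-1}m)) + [\T(\beta^{-1}m)_{-\partial-\lambda}\,x]$ indeed coincides with $\delta_\T(x)(m)$ under our conventions for $\delta_\T$ on $0$-cochains, after which the theorem follows immediately. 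I would not need to invoke the auxiliary identities \eqref{eqsania}, \eqref{eq2con}, or \eqref{eq4con}; those encode higher-order obstructions that are automatically consistent because $\mathcal{T}_t^1, \mathcal{T}_t^2$ were assumed to be honest $\mathcal{O}$-operators.
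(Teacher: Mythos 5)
Your proposal is correct and follows essentially the same route as the paper, whose entire proof is the single line ``The proof follows from equation \eqref{eqsaniaasif}'': you identify exactly that equation (the difference $\mathfrak{T}_2-\mathfrak{T}_1=\delta_\T(x)$ coming from condition \eqref{con3} of the equivalence) as the key step, and supplement it with the cocycle condition $\delta_\T(\mathfrak{T}_i)=0$ from \eqref{b3}, which the paper leaves implicit.
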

\begin{proof}The proof follows from equation \eqref{eqsaniaasif}. \end{proof}
\begin{defn} Let $\mathcal{T} : \mathcal{M} \rightarrow \mathcal{L}$ be an $\mathcal{O}$-operator on a Hom-Lie conformal algebra $(\mathcal{L}, [\cdot_{\lambda}\cdot],\alpha)$ with a representation $(\rho, \mathcal{M}, \beta)$. A linear deformation $\mathcal{T}_{t} := \mathcal{T} + t\mathfrak T$ is called trivial if it is equivalent to the deformation $\mathcal{T}_{0} = \mathcal{T}$.
\end{defn}
By the remark \eqref{rem2.1} and Eq. \eqref{differential}, we have the following modified cochain complex
$$(C^*_{\b,\a}(\M, \L), \delta_{\b,\a}).$$
 The set of modified $0$-cochain is defined by $C^0_{\b,\a}(\M,\L):= \{x\in \L| \a=id\},$ and the coboundary map $\delta_{\b,\a}: C^0_{\b,\a}(\L,\M)\to C^1_{\b,\a}(\L,\M)$ on $0$-cochain is given by $$ \delta_{\b,\a}(x)(m) := \rho_\T(\b^{-1}(m))_\l(x), $$ for all $x\in C^0_{\b,\a}(\M,\L)$, $m\in \M$  and  $\l\in \mathbb{C}$. According to Eq. \eqref{eqdifop}, we can extend the bracket $\{\!\!\{-,-\}\!\!\}$ to the modified cochain $C^*_{\b,\a}(\M, \L)$. For example,  
for any $x,y\in \L$, and $\T\in C^1_{\b,\a}(\M, \L)$ the bracket is given by 
$\{\!\!\{x,y\}\!\!\}=[x_\l y]$ and 
\begin{equation}
\begin{aligned}
\delta_\T(x)=\{\!\!\{\T,x\}\!\!\}(m)=\T (\rho(x)_\l(\b^{-1}(m)) )+ [ \T(\b^{-1}m)_{-\p-\l} x ] =\delta_{\b,a}(x)m.
\end{aligned}
\end{equation}

\begin{defn}Let $\T : \M \to \L$ be an $\mathcal{O}$-operator on the Hom-Lie conformal  algebra $(\L, [\cdot_\l\cdot],\a)$ with respect to a representation $(\rho,\mathcal{M},\beta)$. An element $x\in \L$ is called a Nijenhuis element associated to the operator $\T$ if $x $ satisfies $\a(x)=x$, the identities \eqref{eq2con}, \eqref{eq4con} and the identity $$[x_\m \T (\rho(x)_\l(m) )+ [\T(m)_{-\p-\l}x] ]=0, \text{ for all }m\in \M.$$ \end{defn} 
Let us denote the set of Nijenhuis elements associated to the $\mathcal{O}$-operator $\T$ by $Nij(\T)$.
\begin{thm}Let $\T : \M \to \L$ be an $\mathcal{O}$-operator on a Hom-Lie conformal algebra $(\L, [\cdot_\l\cdot],\a)$ with respect to a representation $(\rho,\M,\b)$. For any element $x\in Nij(\T)$, the linear deformation $\T_t:=\T + t\mathfrak{T}$ generated by $\mathfrak T:= \delta_\T (x)$ is a trivial deformation of $\T$. \end{thm}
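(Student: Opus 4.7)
The plan is to establish triviality of $\T_t = \T + t\,\delta_\T(x)$ by exhibiting the pair $(Id_\L + t\,ad^{\dagger}_x,\; Id_\M + t\,\rho(x)^{\dagger})$ as an explicit homomorphism of $\mathcal{O}$-operators from $\T_t$ to $\T_0 = \T$. In the notation of the preceding subsection this corresponds to setting $\mathfrak T_1 = \mathfrak T = \delta_\T(x)$ and $\mathfrak T_2 = 0$, and then verifying the four conditions labelled (1)--(4) just before Eq.\ \eqref{eq2con}.

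Condition (2) is immediate from $\a(x) = x$, since $\b$ commutes with $\rho(x)^{\dagger}$ by the representation axioms. For conditions (1) and (4), I expand each side as a polynomial in $t$; the $t^0$ and $t^1$ terms vanish by the Hom-Jacobi identity and by the representation axioms for $(\rho,\M,\b)$, while the $t^2$ coefficients reduce precisely to the identities \eqref{eq2con} and \eqref{eq4con}, both of which are built into the definition of $x \in Nij(\T)$.

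Condition (3) is the core of the argument. At order $t$, the equation $\T\circ(Id_\M + t\,\rho(x)^\dagger) = (Id_\L + t\,ad^\dagger_x)\circ(\T + t\mathfrak T)$ yields $\mathfrak T = \T\circ\rho(x)^\dagger - ad^\dagger_x\circ\T$; using $\a\T = \T\b$ and conformal skew-symmetry this rewrites as
\[
\mathfrak T(m) = \T(\rho(x)_\l\b^{-1}(m)) + [\T(\b^{-1}m)_{-\p-\l}x],
\]
which is the formula for $\delta_\T(x)$ recalled at the end of the linear deformation subsection. At order $t^2$, after the substitution $m' = \b^{-1}(m)$ and using $\a(x) = x$, the coefficient collapses to
\[
[x_\m\bigl(\T(\rho(x)_\l m') + [\T(m')_{-\p-\l}x]\bigr)] = 0,
\]
which is the remaining defining identity for a Nijenhuis element.

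Finally, I must also verify that $\T_t$ is genuinely a linear deformation, i.e.\ that Eqs.\ \eqref{b1}--\eqref{b3} hold. Eq.\ \eqref{b1} follows from $\a\T = \T\b$ together with $\a(x) = x$; Eq.\ \eqref{b3} is the cocycle condition $d_\T(\mathfrak T) = 0$, which is automatic because $\mathfrak T = \delta_\T(x) = d_\T(x)$ is a coboundary and $d_\T^2 = 0$. The quadratic Eq.\ \eqref{b2} is the principal bookkeeping obstacle; once conditions (1)--(4) are established the homomorphism $(Id_\L + t\,ad^\dagger_x, Id_\M + t\,\rho(x)^\dagger)$ intertwines the $\mathcal{O}$-operator axioms for $\T$ with those for $\T_t$, so (b2) is forced. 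The hardest single step throughout is aligning the sesquilinear $\l$-shifts with the twists $\a^{-1}$ and $\b^{-1}$ hidden in $ad^\dagger_x$ and $\rho(x)^{\dagger}$, particularly in the $t^2$ term of condition (3); once that matching is made explicit, everything else is routine.
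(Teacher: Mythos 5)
The paper does not actually prove this theorem: its ``proof'' is the single sentence ``For the detailed proof, see \cite{Mishra-Naolekar, Tang-Guo-Sheng}.'' Your proposal supplies exactly the argument those references use, transported to the Hom--conformal setting, so in spirit it is the intended proof and it is essentially correct: exhibiting $(Id_\L + t\,ad^{\dagger}_x, Id_\M + t\,\rho(x)^{\dagger})$ as a homomorphism from $\T_t$ to $\T_0$, reading off the $t^1$-coefficient of condition (3) to recover $\mathfrak T=\delta_\T(x)$, and matching the $t^2$-coefficients of conditions (1), (3), (4) with the three defining identities of $Nij(\T)$ is precisely the right bookkeeping, and your identification of the $t^2$-term of condition (3) with $[x_\m\,\T(\rho(x)_\l m')+[\T(m')_{-\p-\l}x]\,]=0$ is correct.

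One step should be made explicit rather than declared ``forced.'' To get Eq.~\eqref{b2}, i.e.\ that $\T_t$ really is an $\mathcal{O}$-operator for all $t$, the intertwining argument gives
\begin{equation*}
(Id_\L + t\,ad^{\dagger}_x)\Bigl([\T_t(m)_\l\T_t(n)]-\T_t\bigl(\rho(\T_t(m))_\l n-\rho(\T_t(n))_{-\p-\l}m\bigr)\Bigr)=0
\end{equation*}
by combining conditions (1), (3), (4) with the $\mathcal{O}$-operator identity for $\T$. Since $Id_\L+t\,ad^{\dagger}_x$ need not be injective for a fixed value of $t$, you should add the observation that the bracketed expression is a polynomial of degree at most $2$ in $t$, so comparing coefficients of $t^0,t^1,\dots$ in the displayed identity forces it to vanish term by term. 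Finally, be aware that the paper's own Eq.~\eqref{eqsaniaasif} carries the opposite sign to its condition (3) (the source and target are swapped between the two displays); your orientation is consistent with condition (3) as literally stated, so no change is needed on your side, but a reader comparing against \eqref{eqsaniaasif} will see an apparent sign discrepancy that is the paper's, not yours.
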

\begin{proof} For the detailed proof, see \cite{Mishra-Naolekar, Tang-Guo-Sheng}.
\end{proof}
\subsection{ Formal deformations:}
Let $\mathcal{T}:\mathcal{M} \rightarrow \mathcal{L}$ be an $\mathcal{O}$-operator on a Hom-Lie conformal algebra $(\mathcal{L}, [\cdot_{\lambda}\cdot],\alpha)$ with respect to representation $(\rho,\mathcal{M},\beta)$. Let $\mathbb{C}[[t]]$ be the ring of  formal power series in one variable $t$ and $\mathcal{L}[[t]]$ be the formal
power series in $t$ with coefficients in $\mathcal{L}$. Then the triplet $(\L[[t]], [\cdot _{\lambda}\cdot ]_{t}, \alpha_{t})$ is a Hom-Lie conformal
algebra, where the bracket $[\cdot _\lambda \cdot ]_{t}$ and the structure map $\alpha_{t}$ are obtained by extending $[\cdot _\lambda \cdot ]$ and $\a$ linearly over the ring $\mathbb{C}[[t]]$. Moreover, the map $\rho : \mathcal{L} \rightarrow gc(\mathcal{M} )$ and $\beta : \mathcal{M} \rightarrow \mathcal{M}$ can be extended linearly over $\mathbb{C}[[t]]$ to obtain $\mathbb{C}[[t]]$-linear maps $\rho_{t} : \mathcal{L}[[t]] \otimes \mathcal{M} [[t]] \rightarrow \mathcal{M} [[t]]$ and $\beta_{t} : \mathcal{M} [[t]] \rightarrow \mathcal{M} [[t]]$. Then the triplet $(\mathcal{M} [[t]], \beta_{t}, \rho_{t})$ is a Hom-Lie conformal algebra representation of $(\mathcal{L}[[t]], [ \cdot_{\lambda} \cdot ]_{t}, \alpha_{t})$.
\begin{defn}
	Let $\mathcal{T} : \mathcal{M}\rightarrow \mathcal{L}$ be an $\mathcal{O}$-operator on a Hom-Lie conformal algebra $(\mathcal{L}, [\cdot_{\lambda}\cdot],\alpha)$ with a representation $(\mathcal{M},\beta,\rho)$. And let $\mathcal{T}_{t} : \mathcal{M} [[t]] \rightarrow \mathcal{L}[[t]]$ be an $\mathcal{O}$-operator on $(\mathcal{L}[[t]], [\cdot_{\lambda}\cdot]_{t},\alpha_{t})$ with the representation $(\mathcal{M} [[t]], \beta_{t}, \rho_{t})$. If $\mathcal{T}_{t}$ = $\mathcal{T}_{0} + \sum_{i=1}^{\infty} t^{i}\mathcal{T}_{i}$,
	where $\mathcal{T}_{i} \in C^{1}_{\b,\a}(\mathcal{M},\mathcal{L})$ and $\mathcal{T}_{0} = \mathcal{T}$, then $\mathcal{T}_{t}$ is called a formal deformation of $\mathcal{T}$ .
\end{defn}
Equivalently,
\begin{equation}\T_t(\b(m))=\b(\T_t(m)),\label{45}
\end{equation}
\begin{equation}\label{46}
[\T_t( m), \T_t(n)] = \T_t(\{\T_t (m),n\}- \{\T_t (n), m\}), \quad \forall m,n \in \M.
\end{equation}
Eq. \eqref{45} holds trivially, because $\T_i \in C^1_{\b,\a}(\M,\L)$. By comparing the coefficients of $t^k$ in Eq. \eqref{46}, we get the following  expressions:
\begin{equation}
\sum_{i+j=k}[\T_i(m), \T_j(n)] = \sum_{i+j=k} \T_i(\{\T_j(m),n\}- \{\T_j(m), n\}),\quad k= 0,1,..
\end{equation}
\begin{itemize}
	\item If $k=0$ we have, $\T_0=\T$. 
	\item If $k=1$, we have \begin{equation}
	[\T_1(m), \T_0(n)]+[\T_0(m), \T_1(n)] = \T_1(\{\T_0(m),n\}- \{\T_0(m), n\})+ T_0(\{\T_1(m),n\}- \{\T_1(m), n\}), 
	\end{equation}
	which is equivalent to the condition: $\delta_\T(\T_1)= 0$.
\end{itemize} Therefore, we get the proposition \eqref{prop3.8}.
\begin{prop}\label{prop3.8}
	The infinitesimal of the deformation $\mathcal{T}_{t}$ is a $1$-cocycle in the cohomology
	of the $\mathcal{O}$-operator $\mathcal{T}$. And the $k$-infinitesimal of the deformation $\mathcal{T}_{t}$ is a $1$-cocycle in the
	cohomology of the $\mathcal{O}$-operator $\mathcal{T}$ .
\end{prop}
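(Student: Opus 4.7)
The plan is to extract the claim directly from the formal deformation identity
\begin{equation*}
\sum_{i+j=k}[\T_i(m)_{\lambda}\T_j(n)] = \sum_{i+j=k}\T_i\bigl(\rho(\T_j(m))_{\lambda} n-\rho(\T_j(n))_{-\p-\lambda} m\bigr)
\end{equation*}
by comparing it with the explicit formula for the differential $\delta_\T$ on $C^{1}_{\b,\a}(\M,\L)$. The first sentence (infinitesimal = $1$-cocycle) is already the content of the $k=1$ computation sketched above Proposition 3.8, so I would just record it as the base case and point out that the $t^1$-coefficient equation coincides, after a relabelling, with
\begin{equation*}
\delta_{\T}(\T_1)_{\lambda}(m,n) \;=\; [\T(m)_{\lambda}\T_1(n)]+[\T_1(m)_{\lambda}\T(n)]-\T\bigl(\rho(\T_1(m))_{\lambda}n-\rho(\T_1(n))_{-\p-\lambda}m\bigr)-\T_1\bigl(\rho(\T(m))_{\lambda}n-\rho(\T(n))_{-\p-\lambda}m\bigr),
\end{equation*}
which is exactly $\{\!\!\{\T,\T_1\}\!\!\}_{\lambda}(m,n)$ up to sign, via the formula proved for $\delta_\T(P)=(-1)^{p}\{\!\!\{\T,P\}\!\!\}$ with $p=1$.

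For the second sentence, I would define the $k$-infinitesimal precisely as the first non-vanishing coefficient $\T_k$, i.e. assume $\T_1=\cdots=\T_{k-1}=0$ so that $\T_t=\T+t^{k}\T_k+\sum_{i>k}t^{i}\T_i$. Then the $t^{k}$-coefficient of the deformation identity reduces to
\begin{equation*}
[\T(m)_{\lambda}\T_k(n)]+[\T_k(m)_{\lambda}\T(n)] = \T\bigl(\rho(\T_k(m))_{\lambda} n-\rho(\T_k(n))_{-\p-\lambda} m\bigr)+\T_k\bigl(\rho(\T(m))_{\lambda} n-\rho(\T(n))_{-\p-\lambda} m\bigr),
\end{equation*}
because every cross-term contains some $\T_i$ with $0<i<k$ and therefore vanishes. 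Reading this identity against the explicit expression for $\delta_\T$ derived in the preceding section yields $\delta_\T(\T_k)=0$, so $\T_k$ is a $1$-cocycle in the cohomology $\mathcal H^{*}_{\b,\a}(\M,\L)$ of the $\mathcal{O}$-operator $\T$.

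The only mild obstacle is bookkeeping: one has to check that the vanishing of the intermediate $\T_i$ for $0<i<k$ really kills every mixed term in both sums of the deformation equation, which is immediate from the index condition $i+j=k$ together with $\T_0=\T$. No additional structure is needed; the compatibility $\T_k\circ\b=\a\circ\T_k$ that places $\T_k$ in $C^{1}_{\b,\a}(\M,\L)$ is built into the ansatz, and the Maurer-Cartan viewpoint of Theorem 4.6 guarantees that $\delta_\T=\{\!\!\{\T,-\}\!\!\}$ is a genuine differential, so no further verification is required.
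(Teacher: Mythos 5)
Your proposal is correct and follows essentially the same route as the paper: the first claim is exactly the paper's comparison of the $t^{1}$-coefficients of the deformation identity with the formula $\delta_\T(\T_1)=\{\!\!\{\T,\T_1\}\!\!\}$, and the $k$-infinitesimal claim is the same coefficient comparison at order $t^{k}$ after noting that the vanishing of $\T_1,\dots,\T_{k-1}$ kills all mixed terms. The paper leaves the second half implicit, so your explicit bookkeeping of the cross-terms is a welcome (but not substantively different) elaboration.
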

\begin{defn}
	Let $\mathcal{T}_{1} \in C^{1}_{\b,\a}(\M,\L)$ be a $1$-cochain, it is said to be infinitesimal of the deformation $\mathcal{T}_{t}$.
\end{defn}
\begin{rem}
	If $\mathcal{T}_{i} = 0$, where $1 \leq i \leq k-1$ and $\mathcal{T}_{k}$ is a non-zero cochain, then $\mathcal{T}_{k}$ is said to be $k$-infinitesimal of the deformation $\mathcal{T}_{t}$.
\end{rem}
\begin{defn}
	Let $\mathcal{T}_{t}$ and $\mathcal{T}^{\prime}_{t}$ be two deformations of the $\mathcal{O}$-operator $\mathcal{T}$, then they are called equivalent if there exists an element $x \in C^{0}_{\b,\a}(\mathcal{M},\mathcal{L})$ and two $\mathbb{C}$-linear maps $\varphi_{i}:\mathcal{L} \rightarrow \mathcal{L}$ and $\psi_{i}: \mathcal{M} \rightarrow \mathcal{M}$, for $i \geq 2,$ such that 
	\begin{align*}
	\varphi_{t} = Id_{\mathcal{L}} + t(ad_{x}^{\dagger}) + \sum_{i=2}^{\infty} t^{i}\varphi_{i},\quad
	\psi_{t} = Id_\mathcal{M} + t\rho(x)^\dagger + \sum_{i=2}^{\infty} t^{i}\psi_{i}.\end{align*}
	Meanwhile, $(\varphi_{t}, \psi_{t})$ is a formal isomorphism from $\mathcal{T}_{t}$ to $T^{\prime}_{t}$. For $x \in C^{0}_{(\alpha, \beta)}(\mathcal{L}, \mathcal{M})$, the maps $ad_{x}^\dagger : \mathcal{L} \rightarrow \mathcal{L}$ and $\rho(x)^\dagger :\mathcal{M} \rightarrow \mathcal{M}$ are defined by $ad_{x}^\dagger(y) = \alpha^{-1}(ad_{x}(y))$ and $\rho(x)_{\l}^\dagger(m) = \beta^{-1}(\rho(x)_\l(m))$, for all $x, y \in \mathcal{L}~and~ m \in \mathcal{M}$.
\end{defn}
\begin{rem}\label{rem5.2}
	By the above definition, we obtain the following conclusion:
	\begin{enumerate}
		\item \label{codition1}$ \varphi_{t} \circ \alpha = \alpha\circ \varphi_{t}$ and $\psi_{t}\circ \beta = \beta\circ  \psi_{t}$,
		\item \label{codition2} $\rho_{t}(\varphi_{t}(y))_{\m}( \psi_{t}(m)) =  \psi_{t}(\rho(y)_\m(m))$,
		\item \label{codition3}$ \varphi_{t}[y_{\m}z] = [\varphi_{t}(y)_\m \varphi_{t}(z)]$,
		\item \label{codition4} $\varphi_{t} \circ \mathcal{T}_{t} = \mathcal{T}^{\prime}_{t}\circ\psi_{t}$.
	\end{enumerate}
\end{rem}
\begin{prop}
	Let two deformations $\mathcal{T}_{1}$ and $\mathcal{T}^{\prime}_{1}$
	of the $\mathcal{O}$-operator $\mathcal{T}$ on a Hom-Lie conformal algebra $(L, [\cdot_{\lambda}\cdot],\alpha)$ with a representation $( \rho,\mathcal{M},\beta)$ are equivalent. Then the infinitesimals of equivalent deformations belong to the same cohomology class in $\tilde{H}_{\b,\a}(\mathcal{M},\mathcal{L})$.
\end{prop}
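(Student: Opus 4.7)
The plan is to exploit condition \eqref{codition4} of Remark \ref{rem5.2}, namely $\varphi_t\circ\mathcal{T}_t=\mathcal{T}'_t\circ\psi_t$, and to extract the coefficient of $t^1$. I would write $\mathcal{T}_t=\mathcal{T}+t\mathcal{T}_1+\sum_{i\geq 2}t^i\mathcal{T}_i$ and $\mathcal{T}'_t=\mathcal{T}+t\mathcal{T}'_1+\sum_{i\geq 2}t^i\mathcal{T}'_i$, with $\varphi_t=Id_\L+t\,ad_x^{\dagger}+\sum_{i\geq 2}t^i\varphi_i$ and $\psi_t=Id_\M+t\,\rho(x)^{\dagger}+\sum_{i\geq 2}t^i\psi_i$. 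Equating the coefficients of $t$ on both sides of the equivalence identity produces
\begin{equation*}
\mathcal{T}_1(m)+ad_x^{\dagger}(\mathcal{T}(m))=\mathcal{T}'_1(m)+\mathcal{T}(\rho(x)^{\dagger}(m)),\quad m\in\M,
\end{equation*}
so that $\mathcal{T}_1(m)-\mathcal{T}'_1(m)=\mathcal{T}(\rho(x)^{\dagger}(m))-ad_x^{\dagger}(\mathcal{T}(m))$.

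Next, I would substitute the definitions $ad_x^{\dagger}(y)=\a^{-1}[x_\l y]$ and $\rho(x)^{\dagger}(m)=\b^{-1}\rho(x)_\l(m)$. Using $\a(x)=x$, $\a\mathcal{T}=\mathcal{T}\b$, the intertwining relation $\b\,\rho(x)_\l=\rho(\a(x))_\l\,\b$ derived from the representation axioms, and the conformal skew-symmetry $[x_\l y]=-[y_{-\p-\l}x]$, the right-hand side collapses to
\begin{equation*}
\mathcal{T}(\rho(x)_\l\b^{-1}(m))+[\mathcal{T}(\b^{-1}(m))_{-\p-\l}x]=\delta_\T(x)(m),
\end{equation*}
where I invoke the formula for $\delta_\T$ on $0$-cochains recorded at the end of Subsection 5.1 and the fact that $\a(x)=x$ places $x$ in $C^0_{\b,\a}(\M,\L)$.

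Therefore $\mathcal{T}_1-\mathcal{T}'_1=\delta_\T(x)$ is a $1$-coboundary in the complex $(C^*_{\b,\a}(\M,\L),\delta_\T)$. Combined with Proposition \ref{prop3.8}, which guarantees that both infinitesimals are $1$-cocycles, this yields $[\mathcal{T}_1]=[\mathcal{T}'_1]$ in $\tilde H_{\b,\a}(\M,\L)$, as required.

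The main technical obstacle is the careful bookkeeping of the twists $\a^{-1}$ and $\b^{-1}$ hidden in $ad_x^{\dagger}$ and $\rho(x)^{\dagger}$: one must check that $\a\mathcal{T}=\mathcal{T}\b$ (equivalently $\a^{-1}\mathcal{T}=\mathcal{T}\b^{-1}$) transports through each bracket term, and that $\b^{-1}\rho(x)_\l=\rho(x)_\l\b^{-1}$ under the normalising assumption $\a(x)=x$, so that the mixed twist-and-bracket expressions align precisely with the coboundary formula. Once these intertwinings are in place, the identification with $\delta_\T(x)$ is immediate and the cohomological conclusion is formal; the higher-order equations in $t$ are not needed for the present statement since only the $t^1$-coefficient is compared.
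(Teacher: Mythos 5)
Your proposal is correct and follows essentially the same route as the paper: both extract the $t^1$-coefficient of the equivalence condition $\varphi_t\circ\mathcal{T}_t=\mathcal{T}'_t\circ\psi_t$ and identify the resulting difference $\mathcal{T}_1-\mathcal{T}'_1$ with $\mathcal{T}\rho(x)_\l\b^{-1}(m)+[\mathcal{T}(\b^{-1}(m))_{-\p-\l}x]=\delta_\T(x)(m)$, a coboundary. The paper simply states this identity outright, while you additionally spell out the intermediate bookkeeping with $ad_x^{\dagger}$, $\rho(x)^{\dagger}$ and the twist relations, which is consistent with the paper's (terser) argument.
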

\begin{proof}
By condition \eqref{codition4} of the Remark \eqref{rem5.2}, it follows that\begin{align*}\mathcal{T}_{1}(m)-\mathcal{T}^{\prime}_{1}(m)&
		= \mathcal{T} \rho(x)_\l\b^{-1}(m) + [\mathcal{T} (\b^{-1}(m))_{-\p-\l} x]\\&
		= \rho_{T} (\b^{-1}(m))(x)\\&= (\delta_{\b,\a}(x))(m).
	\end{align*}
		Consequently, $\T_{1}$ and $\T^{'}_{1}$ are in the same cohomology class $\tilde{H}_{\beta,\alpha}(\mathcal{M},\mathcal{L})$. 
\end{proof}
\subsection{ Obstructions in extending a finite order deformation to the next order:} Consider a Hom-Lie conformal algebra $(\L,[\cdot_\l\cdot],\a)$ and  the space $\L[[t]]/ t^{k+1}$, which is a $\mathbb{C}[[t]]/ t^{k+1}$-module. The structure map $\a_t$ and $\l-$bracket $[\cdot_\l\cdot]_t$ can be extended to $\L[[t]]/t^{k+1}$ by $\mathbb C[[t]]/t^{k+1}$-linearity making $(\L[[t]]/ t^{k+1}, [\cdot_\l\cdot]_t, \a_t)$ a Hom-Lie conformal algebra over $\mathbb{C}[[t]]/ t^{k+1}$.
\begin{defn} Consider  $\T: \M \to \L$ be an $\mathcal{O}$-operator on the Hom-Lie conformal algebra $(\L, [\cdot_\l\cdot], \a)$. An order $k$ deformation of the $\mathcal{O}$-operator $\T$ is a sum $$\T_t = \T_0+t\T_1+ t^2 \T_2+ \cdots \quad( modulo ~t^{k+1}),$$ where $\T_t$ is an $\mathcal{O}$ operator on the Hom-Lie conformal algebra $(\L[[t]]/ t^{k+1},[\cdot_\l\cdot]_t,\a_t)$ with respect to the representation $( \rho_{t}, \mathcal{M} [[t]], \beta_{t})$. Also $\T_0= \T$ and $\T_i\in C^1_{\b,\a}(\M,\L)$.\end{defn}
As we know that for $\T_t$ to be the deformation of order $k$, the following conditions hold:
\begin{align*}
\nonumber\a \circ \T_k =&~ \T_k \circ \b,\\
\label{eqmeri}
\sum_{i+j=k}[{\T_i(m)}_{\l} \T_j(n)]= &\sum_{i+j=k}\T_i(\{{\T_j(m)}_{\l} n\}+ \{m_{\l}{\T_{j}(n)}\}), \quad \forall ~m,n\in \M.
\end{align*}
for $i,j= 0, 1,\cdots, k$.
\begin{defn}A deformation $\T_t =\sum_{i=0}^{k}t^i\T_i$ of order $k$ is said to be extensible if there exists an element $\T_{k+1} \in C^1_{\b,\a}(\M,\L)$ with $\a \circ \T_{k+1}= \T_{k+1} \circ \b$ such that $\bar{\T_t }= \sum_{i=0}^{k+1}t^i\T_i$ defines a deformation of order $k+1$. \end{defn}
If $\T$ is extensible, then following deformation equation also needs to be satisfied: 
\begin{align*}
\sum_{i+j=k+1}[{\T_i(m)}_{\l} \T_j(n)]= \sum_{i+j=k+1}\T_i(\{{\T_j(m)}, n\}+ \{m, {\T_{j}(n)}\}), \quad \forall ~m, n\in \M.\end{align*}
Note that the above equation is equivalent to \begin{align*}
[\T(m)_{\l} \T_{k+1}(n)]+&[\T_{k+1}(m)_{\l} \T(n)]+\sum_{i+j=k+1}[\T_i(m)_{\l} \T_j(n)]\\=&
\T(\{\T_{k+1}(m), n\}+ \{m,\T_{k+1}(n)\})+\T_{k+1}(\{\T(m), n\}+ \{m, \T(n)\})\\&+\sum_{i+j=k+1} \T_i(\{\T_j(m), n\}+ \{m, \T_j(n)\}).\end{align*}
\begin{align*}
\sum_{i+j=k+1}([\T_i(m)_{\l} \T_j(n)]&- \T_i(\{\T_j(m), n\}+ \{m, \T_j(n)\}))\\=&-[\T(m)_{\l} \T_{k+1}(n)]-[\T_{k+1}(m)_{\l} \T(n)]+ 
\T(\{\T_{k+1}(m), n\}+ \{m,\T_{k+1}(n)\})\\&+\T_{k+1}(\{\T(m), n\}+ \{m, \T(n)\})\\=&-\delta_{\T}(\T_{k+1})(m,n).\end{align*}
Above equation can be written as
\begin{equation}\label{aiza}\delta_{\T}(\T_{k+1})= -\frac{1}{2}\sum_{i+j=k+1,i,j>1}\{\!\!\{\T_i , \T_j\}\!\!\}.
\end{equation}The right-hand side of the above equation does not contain $\T_{k+1}$. It is known as the obstruction to extend the deformation $\T_t$. We denote the obstruction by $Ob_{\T_t}$.

\begin{prop}\label{eqtum}
	In the cohomology of $\mathcal{O}$-operator, the obstruction is a $2$-cocycle, i.e., $\delta_{\T} (Ob_{\T_t} )=~0$.
\end{prop}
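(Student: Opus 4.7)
The plan is to use the differential graded Lie algebra structure on $(C^*_{\b,\a}(\M,\L), \{\!\!\{\cdot,\cdot\}\!\!\}, \delta_\T)$ established in the previous section, in which $\delta_\T=\{\!\!\{\T,-\}\!\!\}$ is a square zero graded derivation of the bracket. Starting from
\[
Ob_{\T_t}=-\tfrac12\sum_{\substack{i+j=k+1\\ i,j\geq 1}}\{\!\!\{\T_i,\T_j\}\!\!\},
\]
I would apply $\delta_\T$ to $Ob_{\T_t}$, push the differential inside each bracket via the graded Leibniz rule, eliminate every $\delta_\T(\T_i)$ for $1\leq i\leq k$ using the order $k$ deformation equation $\delta_\T(\T_i)=-\tfrac12\sum_{s+t=i,\ s,t\geq 1}\{\!\!\{\T_s,\T_t\}\!\!\}$, and then show that the resulting sum of triple brackets vanishes via the graded Jacobi identity combined with the symmetry of the index set.

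In more detail, the Leibniz rule together with the graded symmetry of $\{\!\!\{\cdot,\cdot\}\!\!\}$ on the odd degree cochains $\T_i,\T_j$ yields $\delta_\T(\{\!\!\{\T_i,\T_j\}\!\!\})=\{\!\!\{\delta_\T\T_i,\T_j\}\!\!\}+\{\!\!\{\delta_\T\T_j,\T_i\}\!\!\}$, so that after relabeling $i\leftrightarrow j$ one obtains $\delta_\T(Ob_{\T_t})=-\sum_{i+j=k+1,\ i,j\geq 1}\{\!\!\{\delta_\T\T_i,\T_j\}\!\!\}$. Substituting the order $k$ equations converts this into a scalar multiple of
\[
S:=\sum_{\substack{s+t+j=k+1\\ s,t,j\geq 1}}\{\!\!\{\{\!\!\{\T_s,\T_t\}\!\!\},\T_j\}\!\!\}.
\]
Using graded antisymmetry to rewrite the outer bracket in cyclic form, then invoking the cyclic graded Jacobi identity $\{\!\!\{\T_s,\{\!\!\{\T_t,\T_j\}\!\!\}\}\!\!\}+\{\!\!\{\T_t,\{\!\!\{\T_j,\T_s\}\!\!\}\}\!\!\}+\{\!\!\{\T_j,\{\!\!\{\T_s,\T_t\}\!\!\}\}\!\!\}=0$ together with the invariance of the index set $\{(s,t,j):s+t+j=k+1,\ s,t,j\geq 1\}$ under cyclic permutation, the three sums obtained from the three Jacobi terms coincide after cyclic relabeling. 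Hence each equals zero, $S=0$, and consequently $\delta_\T(Ob_{\T_t})=0$.

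The principal technical obstacle is sign bookkeeping. Since elements of $C^1_{\b,\a}(\M,\L)$ have odd degree in the shifted grading underlying $\{\!\!\{\cdot,\cdot\}\!\!\}$, the graded antisymmetry reads as symmetry $\{\!\!\{\T_i,\T_j\}\!\!\}=\{\!\!\{\T_j,\T_i\}\!\!\}$, and the signs in the Leibniz rule and in the graded Jacobi identity must be propagated carefully so that the two relabeled halves of the Leibniz expansion add rather than cancel, and so that the three cyclic summands of Jacobi genuinely coincide under relabeling. These signs follow from the explicit formula for $\{\!\!\{\cdot,\cdot\}\!\!\}$ recorded in Section 3, and once they are pinned down the cancellation is forced by the standard deformation-theoretic fact that in any differential graded Lie algebra the obstruction to extending a truncated Maurer-Cartan element is automatically a cocycle.
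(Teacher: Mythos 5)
Your proposal follows essentially the same route as the paper's proof: apply $\delta_\T=\{\!\!\{\T,-\}\!\!\}$ to $Ob_{\T_t}$, expand via the graded Jacobi/Leibniz identity, substitute the lower-order deformation equations for $\{\!\!\{\T,\T_i\}\!\!\}=\delta_\T(\T_i)$, and cancel the resulting triple-bracket sums using the symmetry of the index set. If anything, your treatment of the final cancellation (cyclic Jacobi identity plus invariance of the index set under relabeling) is spelled out more carefully than in the paper, which asserts the vanishing of the last sum without detail.
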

\begin{proof}We have\begin{equation*}\begin{aligned}d_{\T}(Ob_{\T_t})&= -\frac{1}{2}\sum_{i+j=k+1,i,j>1}\{\!\!\{\T, \{\!\!\{\T_{i} , \T_{j}\}\!\!\}\}\!\!\}\\&= -\frac{1}{2}\sum_{i+j=k+1,i,j>1}(\{\!\!\{\{\!\!\{ \T , \T_{i}\}\!\!\}, \T_{j}\}\!\!\} -\{\!\!\{\T_i, \{\!\!\{\T , \T_{j}\}\!\!\}\}\!\!\})\\&=\frac{1}{4}\sum_{i_1+i_2+j= n+1, i_1,i_2,j>1}\{\!\!\{\{\!\!\{ \T_{i1} , \T_{i2}\}\!\!\}, \T_{j}\}\!\!\}- \frac{1}{4}\sum_{i+j_1+j_2= k+ 1,i,j_1,j_2>1}\{\!\!\{\T_i, \{\!\!\{\T_{j1} , \T_{j2}\}\!\!\}\}\!\!\}\\&= \frac{1}{2}\sum_{i+j= k+ 1,i,j>1}\{\!\!\{\{\!\!\{ \T_{i} , \T_{j}\}\!\!\}, \T_{k}\}\!\!\}\\&=0.\end{aligned}\end{equation*}
\end{proof}Thus, from the Proposition \eqref{eqtum} and Eq. \eqref{aiza}, the following theorem applies.\begin{thm}Let $\T_t$ be be an order $k$ deformation of an $\mathcal{O}$-operator $\T$. This deformation $\T_t$ is extensible if and only if the obstruction class $[Ob_{\T_t}]\in \tilde{H}^2_{\T}(\M,\L)$ is trivial, where we denote $\tilde{H}^2_{\T}(\M,\L)$ as a cohomology class associated with the $2$-cochain complex.
\end{thm}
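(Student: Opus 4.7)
The plan is to read off the theorem directly from the obstruction equation \eqref{aiza} together with the cocycle condition established in Proposition \eqref{eqtum}. The point is that the relation
\[
\delta_{\T}(\T_{k+1}) \;=\; -\tfrac{1}{2}\!\!\sum_{\substack{i+j=k+1\\i,j>1}}\{\!\!\{\T_i,\T_j\}\!\!\}\;=\;Ob_{\T_t}
\]
derived just before the proposition is exactly the statement that extending $\T_t$ by one order is the same as solving $\delta_{\T}(\T_{k+1})=Ob_{\T_t}$ for some $\T_{k+1}\in C^{1}_{\b,\a}(\M,\L)$ satisfying $\a\T_{k+1}=\T_{k+1}\b$.

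First I would handle the forward direction. Assume $\T_t$ extends to an order $k+1$ deformation $\bar{\T}_t=\sum_{i=0}^{k+1}t^i \T_i$. Then by definition the coefficient of $t^{k+1}$ in the $\mathcal O$-operator equation for $\bar{\T}_t$ vanishes, which is exactly \eqref{aiza}. Rewriting, $Ob_{\T_t}=\delta_{\T}(\T_{k+1})$ is a $2$-coboundary, so $[Ob_{\T_t}]=0$ in $\tilde{H}^2_{\T}(\M,\L)$.

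For the converse, assume $[Ob_{\T_t}]=0$. By Proposition \eqref{eqtum}, $Ob_{\T_t}$ is a genuine $2$-cocycle (this is what makes the cohomology class well-defined), and triviality of its class produces $\T_{k+1}\in C^{1}_{\b,\a}(\M,\L)$ with $\delta_{\T}(\T_{k+1})=Ob_{\T_t}$. The membership of $\T_{k+1}$ in $C^{1}_{\b,\a}(\M,\L)$ already enforces the compatibility $\a\T_{k+1}=\T_{k+1}\b$. Setting $\bar{\T}_t=\T_t+t^{k+1}\T_{k+1}$ and expanding the $\mathcal O$-operator identity modulo $t^{k+2}$, the coefficients up to order $k$ vanish because $\T_t$ was an order $k$ deformation, and the coefficient of $t^{k+1}$ vanishes precisely by \eqref{aiza}. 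Hence $\bar{\T}_t$ is an order $k+1$ deformation extending $\T_t$.

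The only slightly delicate step is bookkeeping: one must verify that the sum $\sum_{i+j=k+1,\,i,j>1}\{\!\!\{\T_i,\T_j\}\!\!\}$ produced in \eqref{aiza} is indeed a cocycle in $(C^{*}_{\b,\a}(\M,\L),\delta_{\T})$ before invoking the hypothesis $[Ob_{\T_t}]=0$. This is exactly Proposition \eqref{eqtum}, whose computation uses the graded Jacobi identity for $\{\!\!\{-,-\}\!\!\}$ and the lower-order deformation equations, and I would simply cite it. With that in hand, the equivalence is immediate, and the theorem follows.
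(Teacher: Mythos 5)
Your argument is correct and is exactly the route the paper takes: the paper states the theorem as an immediate consequence of Eq.~\eqref{aiza} (which identifies $Ob_{\T_t}$ with $\delta_\T(\T_{k+1})$ when an extension exists) and Proposition~\ref{eqtum} (which makes the obstruction a $2$-cocycle so that its class is well defined and its triviality supplies the needed $\T_{k+1}$). You merely spell out the two directions that the paper leaves implicit, so there is nothing to add.
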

\begin{cor}
	Any finite order deformation of $\T$ is extensible, if $\tilde{H}^2_{\T}(\M,\L)= 0$.
\end{cor}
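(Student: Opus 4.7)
The proof is an immediate application of the preceding theorem, so the plan is essentially to invoke that result once. Given any finite-order deformation $\T_t = \sum_{i=0}^{k} t^i \T_i$ of $\T$, Proposition \eqref{eqtum} shows that the obstruction $Ob_{\T_t}$, defined by the right-hand side of Eq. \eqref{aiza}, is a $2$-cocycle in the cochain complex $(C^*_{\b,\a}(\M,\L), \delta_{\T})$ that governs the cohomology of the $\mathcal{O}$-operator $\T$. Consequently, it represents a well-defined cohomology class $[Ob_{\T_t}] \in \tilde{H}^2_{\T}(\M,\L)$.

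Under the hypothesis $\tilde{H}^2_{\T}(\M,\L) = 0$, every class in this cohomology group is automatically trivial, so in particular $[Ob_{\T_t}] = 0$. By the preceding theorem, this vanishing is equivalent to the extensibility of $\T_t$ to an order $k+1$ deformation, namely, one can solve Eq. \eqref{aiza} for a $\T_{k+1} \in C^1_{\b,\a}(\M,\L)$ compatible with $\a$ and $\b$. Since $k$ was arbitrary, the conclusion applies to any finite-order deformation of $\T$, completing the proof.

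The only step worth flagging is purely bookkeeping rather than a genuine obstacle: one must recall that extensibility means producing a single next-order term $\T_{k+1}$ satisfying the compatibility condition \eqref{aiza}, which is solvable precisely when the obstruction cocycle is a coboundary. This equivalence is exactly the content of the if-and-only-if already established in the theorem, so the corollary follows without any further computation.
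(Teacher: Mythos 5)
Your argument is correct and is exactly the intended one: the paper derives the corollary immediately from the preceding theorem together with Proposition \ref{eqtum}, just as you do. Since $[Ob_{\T_t}]$ lives in $\tilde{H}^2_{\T}(\M,\L)$ and that group vanishes by hypothesis, the theorem's if-and-only-if gives extensibility, so no further content is needed.
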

\section*{Data availability}All data is available within the manuscript.
\section*{Declarations}\textbf{Conflict of Interests:} The authors have no conflicts of interest to declare that are relevant to the content of this article.\\
\noindent {\bf Acknowledgment:}
The authors would like to thank the referee for valuable comments and constructive suggestions on this article.

\end{document}